\newcommand{\RR}{\mathbb{R}} 
\newcommand{\CC}{\mathbb{C}} 
\newcommand{\RE}{\mathrm{Re}\,}
\newcommand{\Aut}{\mathrm{Aut}}
\newcommand{\inp}{\,\raisebox{0.4ex}{$\lrcorner$}\;}
\newcommand{\UB}{\mathbb{B}}
\newcommand{\KE}{K\"ahler-Einstein }
\newcommand{\Ric}{\mathrm{Ric}}
\newcommand{\oKE}{\omega_{\mathrm{KE}}}
\newcommand{\oB}{\omega_{\mathrm{B}}}
\newcommand{\dc}{d^c}
\newcommand{\ve}{\varepsilon}
\newcommand{\dz}{dz}
\newcommand{\paren}[1]{\left(#1\right)}
\newcommand{\abs}[1]{\left\lvert#1\right\rvert}
\newcommand{\norm}[1]{\left\|#1\right\|}
\newcommand{\set}[1]{\left\{#1\right\}}
\newcommand{\pd}[2]{\frac{\partial#1}{\partial#2}}
\newcommand{\pdl}[2]{\partial#1/\partial#2}
\newcommand{\ind}[4]{{#1}^{\phantom{#2}#3}_{#2\phantom{#3}#4}}
\newtheorem{theorem}{Theorem}[section]
\newtheorem{lemma}[theorem]{Lemma}
\newtheorem{proposition}[theorem]{Proposition}
\theoremstyle{definition}
\theoremstyle{remark}
\newtheorem{remark}[theorem]{Remark}
\newtheorem{example}[theorem]{Example}
\newcommand{\vW}{\mathcal{W}}
\newcommand{\vZ}{\mathcal{Z}}
\newcommand{\vV}{\mathcal{V}}
\numberwithin{equation}{section}
\title[A method of potential scaling]{A method of potential scaling in the study of  pseudoconvex domains with noncompact automorphism group}
\author{Kang-Hyurk Lee}
\address{Department of Mathematics and Research Institute of Natural Science, Gyeongsang National University, Jinju, Gyeongnam, 52828, Korea}
\email{nyawoo@gnu.ac.kr}
\keywords{automorphism group, the K\"ahler-Einstein metric}
\thanks{The research was supported by the National Research Foundation of Korea (NRF) grant funded by the Korea government (No. NRF-2019R1F1A1060891)}
\subjclass[2010]{32M05, 32Q20, 32M25}
\begin{document}

\begin{abstract}
The affine scaling method has been a typical approach to  study complex domains with noncompact automorphism group. In this article, we will introduce an alternative approach, so called, the method of potential scaling to construct a certain class of potential functions of the \KE metric. We will also prove that if a bounded pseudoconvex domain admits a  potential function of the \KE metric whose differential has constant length, then there is an $1$-parameter family of automorphisms.
\end{abstract}

\maketitle

\section{Introduction}

For a bounded domain $\Omega$ in the complex Euclidean space $\CC^n$, the \emph{automorphism group} of $\Omega$, denoted by $\Aut(\Omega)$, is the set of automorphisms (self-biholomorphisms) of $\Omega$ under the law of the mapping composition. The automorphism group $\Aut(\Omega)$ with the compact-open topology has a Lie group structure. A fundamental problem in both Complex Analysis and Complex Geometry is the classification of pseudoconvex domains with noncompact automorphism group; especially domains with compact quotient. A fundamental result of the classification is B.~Wong's theorem in \cite{Wong}: a strongly pseudoconvex bounded domain with noncompact automorphism group is biholomorphic to the unit ball. Due to J.P.~Rosay's improvement~\cite{Rosay}, the unit ball is the biholomorphically unique, smoothly bounded domain with compact quotient (see also \cite{GKK}). After S.~Pinchuk's observation (\cite{Pinchuk1980,Pinchuk1989}), the affine scaling method has been a typical approach to classify such domains (see \cite{Bedford-Pinchuk1988, KTK1992, KTK-Krantz2008}). The main application of the affine scaling method is to show the existence of $1$-parameter family of automorphisms. This is an important ingredient in S.~Frankel's study on convex domains with compact quotient (\cite{Frankel}).

The affine scaling method is to construct a biholomorphism from a domain with noncompact automorphism group to an unbounded domain admitting an affine translation as its holomorphic tranformation. Suppose that $\Aut(\Omega)$ is noncompact, equivalently, there exists a sequence of automorphisms $\set{f_j}$ of $\Omega$ such that an automorphism orbit $\set{f_j(p)}$ for some $p\in\Omega$ accumulates at a boundary point of $\Omega$. Then any subsequential limit of $\set{f_j}$ is a holomorphic mapping from $\Omega$ to the boundary $\partial\Omega$; thus it is not a biholomorphic imbedding of $\Omega$ into $\CC^n$ anymore. In the scaling method,  taking an affine mapping $A_j$ of $\CC^n$ whose jacobian $dA_j$ blows up properly, we can make  $\set{A_j\circ f_j}$ to converge subsequentially to a biholomorphic imbedding. The choice of affine mappings is strongly depends on the boundary geometry at an orbit accumulation point and the boundary behavior of an automorphism orbit.

In this paper, we will introduce  a method of potential scaling to construct a certain class of potential functions for the holomorphically invariant K\"ahler metric; especially the complete K\"ahler-Einstein metric (Section~\ref{sec:potential scaling}).  If the potential function we obtained satisfies a specified condition, we can construct an $1$-parameter family of automorphisms (Theorem~\ref{thm:main thm2}). This is an alternative (but fundamentally same) approach to the affine scaling method in the study of domains with noncompact automorphism group. We will also deal with a relation to the affine scaling method.

\medskip

\noindent\emph{Notation and Convention.} Throughod paper, the summation convention for duplicated indices is always assumed. We denote the complex conjugate of a tensor by taking the bar on the indices, that is, $\overline{z^\alpha} = z^{\bar\alpha}$, $\overline{h_{\alpha\bar\beta}} = h_{\bar\alpha\beta}$ and so on.


\section{The method of potential scaling and main results}\label{sec:potential scaling}

In this section, we introduce the method of potential scaling and main results. The potential scaling is to rescale potential functions of the holomorphically invariant K\"ahler metric of a bounded pseudoconvex domain by automorphisms. Typical invariant K\"ahler metrics are the complete K\"ahler-Einstein metric and the Bergman metric. In this paper, we will focus on the \KE metric. At the last of this section, we will also discuss the potential scaling for the Bergman metric.


\subsection{The \KE metric} 
For a bounded pseudoconvex domain $\Omega$ in $\CC^n$, the \emph{\KE metric} of $\Omega$, denoted by its K\"ahler form  $\oKE$, is the unique complete K\"ahler metric with the normalized Einstein condition,
\begin{equation*}
\Ric_{\oKE}=-(n+1)\oKE \;.
\end{equation*}
The uniqueness is due to Yau's Schwarz lemma in \cite{Yau1978} and the existence is due to Cheng-Yau~\cite{Cheng-Yau} and Mok-Yau~\cite{Mok-Yau}. Since $\Ric_{\oKE}=-dd^c \log\det(h_{\alpha\bar\beta})$ where $\oKE=ih_{\alpha\bar\beta}dz^\alpha\wedge dz^{\bar\beta}$ in the standard coordinates $z=(z^1,\ldots,z^n)$ and $\dc=\displaystyle{\frac{i}{2}(\bar\partial-\partial)}$,  we can write the Einstein condition by $dd^c \log\det(h_{\alpha\bar\beta})=(n+1)\oKE$. For the sake of simplicity, we will denote by
\begin{equation*}
\psi = \det(h_{\alpha\bar\beta}) 
\end{equation*}
throughout this paper. Then the Einstein condition is now of the form
\begin{equation*}
dd^c \log\psi=(n+1)\oKE \;.
\end{equation*}
Thus the function $\log\psi$ is a canonical potential function of $\oKE$. By Yau's Schwarz lemma, each automorphism $f\in\Aut(\Omega)$ preserves the volume form so that 
\begin{equation*}
f^*(\oKE)^n=(\oKE)^n
\end{equation*}
equivalently
\begin{equation}\label{eqn:KE transformation}
(\psi\circ f) \abs{J_f}^2=\psi \;.
\end{equation}
This implies that each $f\in\Aut(\Omega)$ satisfies  
\begin{multline*}
(n+1)f^*\oKE 
=f^* dd^c \log\psi 
=dd^c  \log(\psi\circ f) \\
=dd^c \paren{\log\psi -\log\abs{J_f}^2} 
= dd^c \log\psi
= (n+1)\oKE\;, 
\end{multline*}
so is an isometry of $\oKE$. Here $J_f$ is the holomorphic Jacobian determinant
\begin{equation*}
J_f=\det\paren{\pd{f^\alpha}{z^\beta}}
\end{equation*}
of the holomorphic mapping $f=(f^1,\ldots,f^n)$.


\subsection{The scaling of \KE potentials}
Let $f\in\Aut(\Omega)$ and consider the pulling-back function 
\begin{equation*}
f^*\log\psi = \log(\psi\circ f)
\end{equation*} 
which is also a potential function of $\oKE$ for each $f\in\Aut(\Omega)$ by \eqref{eqn:KE transformation}

Suppose that there is a sequence $\{f_j\}$ of automorphisms whose orbit of a point of $\Omega$ accumulates at a boundary point. Since any subsequential limit $\set{f_j(p)}$ for each $p\in\Omega$ is on the boundary $\partial\Omega$, the completeness of  $\oKE$ implies that the sequence of potential functions, $\set{\log(\psi\circ f_j)}$, blows up in the sense that $\lim_{j\to\infty}\log(\psi\circ f_j)=\infty$. \emph{The method of potential scaling} is to take dominators $c_j$ properly so that potential functions
\begin{equation*}
\log\frac{\psi\circ f_j}{c_j}
\end{equation*}
converges to another potential function of $\oKE$. The following theorem is on the convergence of the potential scaling.

\begin{theorem}\label{thm:main thm1}
Let $\oKE=ih_{\alpha\bar\beta}dz^\alpha\wedge dz^{\bar\beta}$ be the \KE metric of the bounded pseudoconvex domain $\Omega$ and let $\psi= \det(h_{\alpha\bar\beta})$. Suppose that there is a constant $C>0$ with
\begin{equation}\label{cond:KE KH}
\norm{d\log\psi}_{\oKE}^2 
	= \pd{\log\psi}{z^\alpha}\pd{\log\psi}{z^{\bar\beta}} h^{\alpha\bar\beta}
	<C^2 
	\quad\text{on $\Omega$.}
\end{equation}
Then for any compact subset $K$ in $\Omega$, the collection of holomorphic functions,
\begin{equation*}
\mathcal{F}_K
	=\set{ \frac{J_f}{J_f(p)}: f\in\Aut(\Omega), p\in K} \;,
\end{equation*}
is a normal family. Moreover any limit of a convergent sequence in $\mathcal{F}_K$ is a nowhere vanishing holomorphic function.
\end{theorem}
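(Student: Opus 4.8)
The plan is to convert the analytic hypothesis \eqref{cond:KE KH} into a Lipschitz estimate for the canonical potential $\log\psi$ with respect to the Kähler--Einstein distance, and then to read off uniform bounds for the normalized Jacobians. The starting point is the transformation law \eqref{eqn:KE transformation}, which upon taking logarithms reads $\log\abs{J_f}^2 = \log\psi - \log(\psi\circ f) = \log\psi - f^*\log\psi$. Hence, viewed as a function of $z$, $\log\abs{J_f}^2$ is the difference of the potential and its pullback, and the normalization at $p$ gives
\begin{equation*}
\log\abs{\frac{J_f(z)}{J_f(p)}}^2 = \paren{\log\psi(z)-\log\psi(p)} - \paren{f^*\log\psi(z)-f^*\log\psi(p)}\;.
\end{equation*}

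First I would observe that \eqref{cond:KE KH} says exactly that the $\oKE$-gradient of $\log\psi$ has length less than $C$, so integrating $d\log\psi$ along a path and taking the infimum over path lengths yields $\abs{\log\psi(z)-\log\psi(p)} \le C\,d_{\oKE}(z,p)$, where $d_{\oKE}$ is the Riemannian distance of $\oKE$. The key point is that the same bound, with the same constant, holds for the pullback $f^*\log\psi$: since each $f\in\Aut(\Omega)$ is an isometry of $\oKE$ (as established above), one has $\norm{d(f^*\log\psi)}_{\oKE}^2 = \norm{d\log\psi}_{\oKE}^2\circ f < C^2$, so $f^*\log\psi$ is $C$-Lipschitz for $d_{\oKE}$ as well, uniformly in $f$. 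Combining the two estimates in the displayed identity gives the uniform bound
\begin{equation*}
\abs{\,\log\abs{\frac{J_f(z)}{J_f(p)}}\,} \le C\,d_{\oKE}(z,p)
\end{equation*}
valid for every $f\in\Aut(\Omega)$ and all $z,p\in\Omega$.

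With this in hand the family property is immediate. For any compact set $L\subset\Omega$, the completeness of $\oKE$ together with the continuity of $d_{\oKE}$ makes $M := \sup\set{d_{\oKE}(z,p): z\in L,\ p\in K}$ finite, so that $e^{-CM}\le \abs{J_f(z)/J_f(p)}\le e^{CM}$ on $L$ for all $f\in\Aut(\Omega)$ and all $p\in K$. The upper bound is a locally uniform bound on the holomorphic functions $z\mapsto J_f(z)/J_f(p)$, so Montel's theorem shows $\FB_K$ is a normal family. The lower bound passes to any locally uniform limit $g$ of a convergent sequence, giving $\abs{g}\ge e^{-CM}>0$ on $L$; since $L$ is arbitrary, $g$ is nowhere vanishing on $\Omega$ (in particular it is not identically zero).

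I expect the only real content to lie in recognizing that the isometry property of automorphisms is precisely what transfers the gradient bound \eqref{cond:KE KH} from $\log\psi$ to every pullback $f^*\log\psi$ with a constant independent of $f$; once both potentials are uniformly Lipschitz, the passage from the distance estimate to a normal family via Montel is routine. A minor bookkeeping point is that the norm in \eqref{cond:KE KH} is the length of the gradient entering the path-integral estimate, up to the fixed conventional constant relating the Hermitian and the underlying Riemannian metric, which does not affect the argument.
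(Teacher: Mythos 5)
Your proof is correct and follows essentially the same route as the paper: the paper's Lemma~\ref{lemma:boundedness} is exactly your uniform Lipschitz estimate for the pulled-back potential (obtained there via Gronwall's inequality along unit-speed curves, which is the same computation as your path-integration of $d(f^*\log\psi)$), and both arguments then combine it with the transformation law \eqref{eqn:KE transformation} and Montel's theorem. The only cosmetic difference is at the end, where the paper invokes Hurwitz's theorem together with the normalization $J_f/J_f(p)=1$ at $p$, while you read the nowhere-vanishing of limits directly off the uniform lower bound $e^{-CM}$ — both are valid.
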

Here $(h^{\alpha\bar\beta})$ stands for the inverse matrix of the \KE metric $(h_{\alpha\bar\beta})$.  Assumption~\eqref{cond:KE KH} is associated with the K\"ahler-hyperbolicity of $\oKE$ as will be mentioned in Remark~\ref{rmk:KH}. We will prove this theorem in Section~\ref{sec:convergence}.

\medskip

For a compact subset $K$ in $\Omega$, take $f_j\in\Aut(\Omega)$ and $p_j\in K$ for each $j$. Under the assumption of Theorem~\ref{thm:main thm1}, let us consider a potential scaling
\begin{equation*}
\log \frac{\psi\circ f_j}{(\psi\circ f_j)(p_j)}\;.
\end{equation*}
Since
\begin{equation*}
\frac{\psi\circ f_j}{(\psi\circ f_j)(p_j)} 
	= \frac{\psi}{\psi(p_j)}  \abs{\frac{J_{f_j}(p_j)}{J_{f_j}}}^2
\end{equation*}
by \eqref{eqn:KE transformation}, Theorem~\ref{thm:main thm1} implies that the sequence $\set{J_{f_j}/J_{f_j}(p_j)}$ admits a subsequence converging uniformly on any compact subset of $\Omega$. Passing to a subsequence, we may assume that $J_{f_j}/J_{f_j}(p_j)\to\eta$ uniformly on any compact subset of $\Omega$  and $p_j\to p\in\Omega$. Then
\begin{equation}\label{eqn:scaling limit}
\frac{\psi\circ f_j}{(\psi\circ f_j)(p_j)} 
	\to  \psi_\infty=\frac{\psi}{\psi(p)}\frac{1}{\abs{\eta}^2}
\end{equation}
in the local $C^\infty$ topology. Therefore $\log\psi_\infty$ satisfies $dd^c\log\psi_\infty=(n+1)\oKE$, so it is also a potential function of $\oKE$. 

\medskip

The function $\log\psi_\infty$ possesses information on the boundary value of $\norm{d\log\psi}^2_{\oKE}$.

\begin{proposition}\label{prop:boundary behavior}
Assume \eqref{eqn:scaling limit}. Then 
\begin{equation*}
\norm{d\log\psi_\infty}_{\oKE}(p) 
	=\lim_{j\to\infty} \norm{\log \psi}_{\oKE}(f_j(p)) 
\end{equation*}
for any $p\in\Omega$.
\end{proposition}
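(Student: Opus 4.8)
The plan is to combine the two facts already available: each $f_j$ is a holomorphic isometry of $\oKE$, as established in Section~\ref{sec:potential scaling}, and the scaled potentials converge to $\log\psi_\infty$ in the local $C^\infty$ topology by \eqref{eqn:scaling limit}. The conceptual point is that the pointwise gradient-norm functional $u\mapsto\norm{d\log u}_{\oKE}$ is invariant under pullback by an isometry, so that the right-hand side of the claimed identity is simply the gradient norm of the \emph{unnormalized} scaled potential, evaluated along the sequence, and its limit is governed by the $C^\infty$ convergence to $\log\psi_\infty$.

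First I would record the transformation rule for the differential. Since the normalizing constant $(\psi\circ f_j)(p_j)$ does not depend on the spatial variable, it is annihilated by $d$, so
\begin{equation*}
d\log\frac{\psi\circ f_j}{(\psi\circ f_j)(p_j)}
	= d\log(\psi\circ f_j)
	= f_j^*(d\log\psi)\;.
\end{equation*}
Next I would verify the isometric invariance of the gradient norm. Writing $\partial(\log\psi\circ f_j)=f_j^*(\partial\log\psi)$ and applying the chain rule, a short computation using $f_j^*h=h$ in its inverse form $\pd{f_j^\alpha}{z^\gamma}\overline{\pd{f_j^\beta}{z^\delta}}\,h^{\gamma\bar\delta}=h^{\alpha\bar\beta}\circ f_j$ cancels the Jacobian factors and yields
\begin{equation*}
\norm{d\log(\psi\circ f_j)}_{\oKE}(x)
	=\norm{d\log\psi}_{\oKE}(f_j(x))
	\qquad\text{for every }x\in\Omega\;.
\end{equation*}

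Finally I would pass to the limit. By \eqref{eqn:scaling limit} the scaled potentials converge to $\log\psi_\infty$ in $C^\infty$ on a neighborhood of the point $p$ in question; in particular their first derivatives converge there, and since the metric $\oKE$ is fixed, the pointwise gradient norms converge as well, giving
\begin{equation*}
\norm{d\log(\psi\circ f_j)}_{\oKE}(p)
	\longrightarrow
	\norm{d\log\psi_\infty}_{\oKE}(p)\;.
\end{equation*}
Evaluating the transformation rule at $x=p$ and combining it with this limit produces the asserted equality.

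The computations are entirely routine; the one step requiring care is the isometric invariance of the gradient norm, where one must correctly pass from $f_j^*h=h$ to the corresponding identity for the inverse metric $h^{\alpha\bar\beta}$ in order to absorb the Jacobian factors arising from the chain rule. Once that identity is in place, the $C^\infty$ convergence furnished by \eqref{eqn:scaling limit} makes the limiting step immediate, and no further control of the points $p_j$ is needed beyond the elementary fact that the normalizing constants drop out under differentiation.
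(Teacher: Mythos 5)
Your proof is correct and follows essentially the same route as the paper: the paper's argument is exactly the chain of three equalities you establish --- the $C^\infty$ convergence of the scaled potentials gives convergence of the gradient norms, the normalizing constant $(\psi\circ f_j)(p_j)$ drops out under $d$, and the isometry property of $f_j$ converts $\norm{d\log(\psi\circ f_j)}_{\oKE}(p)$ into $\norm{d\log\psi}_{\oKE}(f_j(p))$. You merely spell out the isometric-invariance computation that the paper takes for granted.
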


\begin{proof}
Since each $f_j$ is an isometry of $\oKE$, we have
\begin{multline*}
\norm{d\log\psi_\infty}_{\oKE}(p) 
	= \lim_{j\to\infty} \norm{\log\frac{\psi\circ f_j}{(\psi\circ f_j)(p_j)}}_{\oKE}(p) \\
	= \lim_{j\to\infty} \norm{\log \psi\circ f_j}_{\oKE}(p)
	= \lim_{j\to\infty} \norm{\log \psi}_{\oKE}(f_j(p)) 
\end{multline*}
for any $p\in\Omega$. 
\end{proof}

Let us see the boundary behavior of $\norm{\log \psi}^2_{\oKE}$ of the unit ball.

\begin{example}
For the unit ball $\UB^n=\set{z\in\CC^n: \norm{z}<1}$, the \KE metric  $\oKE^{\UB^n}=ih^{\UB^n}_{\alpha\bar\beta}dz^\alpha\wedge dz^{\bar\beta}$ is given by
\begin{equation*}
h^{\UB^n}_{\alpha\bar\beta}
	=\frac{1}{\paren{1-\norm{z}^2}^2}
	\paren{
		\delta_{\alpha\bar\beta}(1-\norm{z}^2)
		+z^{\bar\alpha}z^\beta
	} \;,
\end{equation*}
and its inverse is given by
\begin{equation*}
(h^{\UB^n})^{\alpha\bar\beta}
	=(1-\norm{z}^2)
	\paren{
		\delta^{\alpha\bar\beta}-z^\alpha z^{\bar\beta}
	}
	\;.
\end{equation*}
For the determinant of the metric tensor
\begin{equation*}
\psi^{\UB^n}
	=\det(h^{\UB^n}_{\alpha\bar\beta})
	=\frac{1}{\paren{1-\norm{z}^2}^{n+1}}
\;,
\end{equation*}
we can easily see $d\dc\log\psi^{\UB^n}=(n+1)\oKE^{\UB^n}$. Then we have
\begin{equation*}
\norm{d\log\psi^{\UB^n}}_{\oKE}^2
	=(n+1)^2\norm{z}^2
	\;.
\end{equation*}
This implies that the boundary value of $\norm{d\log\psi^{\UB^n}}_{\oKE}$ is $n+1$.
\end{example}

In the case of a strongly pseudconvex domain $\Omega$ with $C^\infty$ smooth boundary, the boundary behavior of the geometric quantities of $\log\psi$ is the same as that of the unit ball (see \cite{Cheng-Yau,Coevering,Choi}). Thus function $\norm{d\log \psi}_{\oKE}$ is continuous up to the boundary of $\Omega$ and its boundary value is always $n+1$:
\begin{equation*}
\lim_{p\to\partial\Omega} \norm{\log \psi}_{\oKE}(p) 
	= n+1 \;.
\end{equation*}
Proposition~\ref{prop:boundary behavior} implies that if an orbit of $\{f_j\}$ accumulate at a boundary point of $\Omega$, then the potential scaling limit $\log\psi_\infty$ satisfies
\begin{equation*}
\norm{d\log\psi_\infty}_{\oKE}\equiv n+1 \;.
\end{equation*}

The second main result of this paper is on the existence of $1$-parameter family of automorphisms.
\begin{theorem}\label{thm:main thm2}
Let $\Omega$ be a bounded pseudoconvex domain in $\CC^n$. If there is a positive-valued smooth function $\tilde\psi:\Omega\to\RR$ such that
\begin{equation*}
dd^c\log\tilde\psi=(n+1)\oKE \quad\text{and}\quad \|d\log\tilde\psi\|_{\oKE}\equiv C
\end{equation*}
for some positive constant $C\leq n+1$, then there is a nowhere vanishing complete holomorphic vector field on $\Omega$.
\end{theorem}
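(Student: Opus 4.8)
The plan is to produce the field as a suitably rescaled complex gradient of $\log\tilde\psi$, and to read off completeness from the level-set geometry; the one genuinely delicate point, which will also force $C=n+1$, is holomorphicity. Write $u=\log\tilde\psi$ and let $u_\alpha,u_{\bar\beta}$ denote the partial derivatives. Since $\log\psi$ is itself a potential of $\oKE$, the difference $u-\log\psi$ is pluriharmonic, so $u$ obeys the same identity $u_{\alpha\bar\beta}=(n+1)h_{\alpha\bar\beta}$. I introduce the $(1,0)$--gradient $W^\alpha=h^{\alpha\bar\beta}u_{\bar\beta}$, for which $\norm{W}_{\oKE}^2=\norm{d\log\tilde\psi}_{\oKE}^2=C^2$. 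As $\oKE$ is Kähler the mixed Christoffel symbols vanish, so the Hessian identity gives
\[
\nabla_\gamma W^\alpha=h^{\alpha\bar\beta}u_{\gamma\bar\beta}=(n+1)\delta^\alpha_\gamma .
\]
Differentiating the constancy $h^{\alpha\bar\beta}u_\alpha u_{\bar\beta}=C^2$ in a holomorphic direction and substituting this yields the first-order relation $W^\alpha u_{;\alpha\gamma}=-(n+1)u_\gamma$, where $u_{;\alpha\gamma}=\nabla_\gamma u_\alpha$ is the $(2,0)$--Hessian; its conjugate reads $T_{\bar\beta\bar\gamma}\,\bar W^{\bar\beta}=-(n+1)u_{\bar\gamma}$ with $T_{\bar\beta\bar\gamma}=\nabla_{\bar\gamma}u_{\bar\beta}$.

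The crux is to pin down the size of the $(0,2)$--Hessian $T$. Applying $\nabla_\gamma$ once more and tracing against $h^{\gamma\bar\delta}$ --- equivalently, computing the complex Laplacian of the constant $\norm{d\log\tilde\psi}_{\oKE}^2$ --- commutes two covariant derivatives and so produces a curvature term; here the Einstein condition $\Ric_{\oKE}=-(n+1)\oKE$ collapses that term to a multiple of the metric, leaving
\[
\norm{T}_{\oKE}^2=(n+1)\paren{C^2-n(n+1)} .
\]
This Bochner-type identity is the technical heart, and the step I expect to be the main obstacle, since a priori the full curvature tensor enters and only its Einstein trace is controlled.

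With these identities the conclusion is quick. Put $\lambda=(n+1)/C^2$ and form $E_{\bar\beta\bar\gamma}=T_{\bar\beta\bar\gamma}+\lambda\,u_{\bar\beta}u_{\bar\gamma}$; this is exactly the obstruction to holomorphicity of
\[
\tilde Z^\alpha:=i\,e^{\lambda u}\,W^\alpha,\qquad\text{because}\qquad \nabla_{\bar\gamma}\tilde Z^\alpha=i\,e^{\lambda u}\,h^{\alpha\bar\beta}E_{\bar\beta\bar\gamma} .
\]
Expanding $\norm{E}_{\oKE}^2$, using $T_{\bar\beta\bar\gamma}\bar W^{\bar\beta}=-(n+1)u_{\bar\gamma}$ for the cross term together with the value of $\norm{T}_{\oKE}^2$ above, one finds
\[
\norm{E}_{\oKE}^2=(n+1)\paren{C^2-(n+1)^2} .
\]
The left side is nonnegative while the hypothesis gives $C\le n+1$; hence $C=n+1$ and $E\equiv0$. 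Thus $\tilde Z=i\,e^{u/(n+1)}W$ is holomorphic, and this is precisely where the bound $C\le n+1$ is consumed --- indeed it shows the hypothesis forces $C=n+1$.

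It remains to verify the last two properties. Nowhere vanishing is immediate from $\norm{\tilde Z}_{\oKE}=e^{\lambda u}\norm{W}_{\oKE}=(n+1)\,e^{u/(n+1)}>0$. For completeness, $\tilde Z(u)=i\,e^{\lambda u}W^\alpha u_\alpha=i\,C^2 e^{\lambda u}$ is purely imaginary, so $\RE\tilde Z(u)=0$; hence the flow of $\RE\tilde Z$ preserves the level sets of $u$, along which $\norm{\tilde Z}_{\oKE}$ is constant. Each integral curve therefore has constant speed and, by completeness of $\oKE$, is defined for all time. So $\tilde Z$ is a nowhere vanishing complete holomorphic vector field, and its real flow is a one-parameter group of automorphisms of $\Omega$.
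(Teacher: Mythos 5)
Your proposal is correct, and its computational skeleton is the paper's: your identities $W^\alpha u_{;\alpha\gamma}=-(n+1)u_\gamma$ and $\norm{T}_{\oKE}^2=(n+1)\paren{C^2-n(n+1)}$ are precisely \eqref{eqn:identity1} and \eqref{eqn:identity2}, obtained there in the same way (differentiate the constant $\norm{d\Phi}_{\oKE}^2$, commute covariant derivatives, trace the curvature term against the Einstein condition --- the paper carries out in full, in Section~\ref{sec:existence}, the Bochner step you only sketch), and your ansatz $ie^{\lambda u}W$ is the paper's $e^{t\Phi}\vV$. Where you genuinely differ is the holomorphicity endgame. The paper leaves $t$ free, computes $\norm{\nabla''(e^{t\Phi}\vV)}_{\oKE}^2=e^{2t\Phi}q(t)$ with $q(t)=C^4t^2-2(n+1)C^2t+(n+1)C^2-n(n+1)^2$, and uses $C\le n+1$ only to make the discriminant $(n+1)C^4\paren{(n+1)^2-C^2}$ nonnegative, so that \emph{some} real root $t$ exists. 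You instead evaluate at the vertex $t=\lambda=(n+1)/C^2$ and observe that $\norm{E}_{\oKE}^2=q(\lambda)=(n+1)\paren{C^2-(n+1)^2}$ is pointwise nonnegative yet equal to a constant that is $\le 0$ under the hypothesis, forcing $C=n+1$ and $E\equiv 0$. This is a genuine refinement: it shows the case $C<n+1$ of the theorem is vacuous, a rigidity statement the paper never makes explicit (though it is implicit there as well, since $q(t)\ge 0$ for every real $t$ forces the discriminant to be $\le 0$, i.e.\ $C\ge n+1$, so the paper's two roots always coincide at your $\lambda$). Your route also streamlines the construction: once $E\equiv 0$, the globally defined field $ie^{u/(n+1)}W$ is holomorphic outright, so you bypass the paper's holomorphic line bundle $L$, the auxiliary function $\rho=-e^{-(n+1)\Phi/C^2}$, and the normalized field $\vZ_\rho=i(\vW\rho)^{-1}\vW$; indeed your $\tilde Z$ is just $(n+1)\vZ_\rho$. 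The completeness arguments are the same idea --- the real part preserves level sets of the potential, its length is constant on each such level set, and completeness of $\oKE$ forbids finite-time escape --- with the paper packaging this as a reparametrization of the globally constant-length field $\rho\vZ_\rho$, while you invoke the escape lemma curve by curve; both are sound.
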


By a holomorphic tangent vector field, we means a holomorphic section $\vZ$ to the holomorphic tangent bundle $T^{1,0}\Omega$. If the corresponding real tangent vector field $\RE\vZ=\vZ+\overline{\vZ}$ is complete, we also say $\vZ$ is complete. Thus a complete holomorphic tangent vector field in Theorem~\ref{thm:main thm2} generates an $1$-parameter family of holomorphic automorphisms of $\Omega$. The proof will be in Section~\ref{sec:existence}.

\medskip

From Kai-Ohsawa~\cite{Kai-Ohsawa}, every bounded homogeneous domain  (equivalently, a bounded pseudoconvex domain biholomorphic to an affine homogeneous domain) has a potential function $\log\tilde\psi$ of $\oKE$ such that $\|d\log\tilde\psi\|_{\oKE}$ is constant and the constant is uniquely determined by the complex structure of the domain. But most bounded homogeneous domains in $\CC^n$ except the unit ball should have the constant greater than $n+1$. In the $1$-dimensional case, the possible constant is only $2=1+1$.
This case have been dealt in \cite{CLY}:
\begin{theorem}[Theorem 3.1 in \cite{CLY}]
\textit{Let $X$ be a Riemann surface with  a complete hermitian metric $\omega_X$ with constant Gaussian curvature $-4$. If there is a function $\varphi:X\to\RR$ with 
\begin{equation*}
d\dc\log\varphi = 2\omega_X 
\quad\text{and}\quad
\norm{d\log\varphi}_{\omega_X}\equiv 2
\end{equation*}
then there is a nowhere vanishing complete holomorphic vector field on $X$.
}
\end{theorem}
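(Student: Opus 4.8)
The plan is to show that the two hypotheses are so rigid that they pin down the full complex Hessian of $\log\tilde\psi$, and then to correct the gradient field of $\log\tilde\psi$ by an explicit integrating factor into a holomorphic field whose imaginary part is Killing. Write $\phi=\log\tilde\psi$, so the hypotheses read $\phi_{\alpha\bar\beta}=(n+1)h_{\alpha\bar\beta}$ and $p:=h^{\alpha\bar\beta}\phi_\alpha\phi_{\bar\beta}\equiv C^2$. I would introduce the $(1,0)$-gradient $\vZ^\alpha=h^{\alpha\bar\beta}\phi_{\bar\beta}$. Since $\norm{\vZ}_{\oKE}^2=p\equiv C^2>0$, the field $\vZ$ is already nowhere vanishing, which will give the nowhere-vanishing clause for free. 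Because $\oKE$ is Kähler one computes $\nabla_\gamma\vZ^\alpha=(n+1)\delta^\alpha_\gamma$ and $\nabla_{\bar\gamma}\vZ^\alpha=h^{\alpha\bar\beta}\overline{S_{\beta\gamma}}$, where $S_{\alpha\gamma}:=\nabla_\alpha\nabla_\gamma\phi$ is the $(2,0)$-Hessian; thus $\vZ$ is holomorphic precisely when $S\equiv0$, and the entire difficulty is that a priori $S\ne0$.

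Next I would derive the identities that control $S$. Differentiating $p\equiv\text{const}$ once and inserting $\phi_{\alpha\bar\beta}=(n+1)h_{\alpha\bar\beta}$ gives the first-order relation $S_{\gamma\alpha}\vZ^\alpha=-(n+1)\phi_\gamma$. Differentiating $p$ a second time, commuting the covariant derivatives with the Ricci identity, and feeding in the Einstein condition $\Ric_{\oKE}=-(n+1)\oKE$, produces the Bochner-type identity $\norm{S}^2=(n+1)\bigl(C^2-n(n+1)\bigr)$.

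The heart of the argument is the following rigidity, which I expect to be the main obstacle. Viewing $S_{\gamma\alpha}\vZ^\alpha=-(n+1)\phi_\gamma$ as ``$S$ applied to $\vZ$'' and using Cauchy--Schwarz (Hilbert--Schmidt versus operator norm), $(n+1)^2C^2=\norm{S\vZ}^2\le\norm{S}^2\norm{\vZ}^2=\norm{S}^2C^2$, so $\norm{S}^2\ge(n+1)^2$. Comparing with the Bochner identity forces $C^2\ge(n+1)^2$, and together with the hypothesis $C\le n+1$ this pins $C=n+1$ (recovering, for $n=1$, the value $2$). But then equality holds throughout Cauchy--Schwarz, so, working at a point in coordinates with $h_{\alpha\bar\beta}=\delta_{\alpha\beta}$, the symmetric matrix $S$ must have operator norm equal to its Hilbert--Schmidt norm, i.e.\ be of rank one, with $\vZ$ its distinguished vector; solving the resulting algebra yields the explicit shape $S_{\alpha\gamma}=-\tfrac1{n+1}\phi_\alpha\phi_\gamma$ as a tensor identity on $\Omega$.

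With $S$ in hand, $\nabla_{\bar\gamma}\vZ^\alpha=-\tfrac1{n+1}\phi_{\bar\gamma}\vZ^\alpha$, and the coefficient $(0,1)$-form $-\tfrac1{n+1}\phi_{\bar\gamma}=\partial_{\bar\gamma}(-\phi/(n+1))$ is $\dbar$-exact. Hence the positive integrating factor $e^{\phi/(n+1)}=\tilde\psi^{1/(n+1)}$ converts $\vZ$ into $\vW:=e^{\phi/(n+1)}\vZ$, for which $\partial_{\bar\gamma}\vW^\alpha=0$: a globally defined, nowhere-vanishing holomorphic vector field (concretely $\vW=(n+1)\grad^{1,0}\tilde\psi^{1/(n+1)}$). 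It remains to obtain completeness. The subtlety is that $\RE\vW$ itself is not Killing; instead I would use $\RE(i\vW)=i\vW-i\overline{\vW}$. A direct computation from $\nabla_\gamma\vW^\alpha=e^{\phi/(n+1)}\bigl(\tfrac1{n+1}\phi_\gamma\vZ^\alpha+(n+1)\delta^\alpha_\gamma\bigr)$ shows that the symmetrized covariant derivative of $\RE(i\vW)$ vanishes, so $\RE(i\vW)$ is a Killing field of $\oKE$. Since $\oKE$ is complete, this Killing field is complete; therefore $i\vW$ is a nowhere-vanishing complete holomorphic vector field, whose flow is the desired $1$-parameter family of isometries, hence of automorphisms of $\Omega$.
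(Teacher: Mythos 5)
Your proposal is correct, and it is worth comparing with the paper's own treatment, which is the proof of Theorem~\ref{thm:main thm2} in Section~\ref{sec:existence} (the quoted result is its one-dimensional instance, with $n=1$, $C=2$). At the holomorphicity step the paper never identifies the $(2,0)$-Hessian $S$: it computes $\norm{\nabla''\paren{e^{t\Phi}\vV}}_{\oKE}^2=e^{2t\Phi}\paren{C^4t^2-2(n+1)C^2t+\norm{S}^2}$ and only needs a real root $t$ of the quadratic, which exists exactly when the discriminant $(n+1)C^4\paren{(n+1)^2-C^2}$ is nonnegative, i.e.\ when $C\le n+1$. Your Cauchy--Schwarz inequality $\norm{S}^2\ge(n+1)^2$ is precisely the complementary statement that this quadratic, being a pointwise norm, is nonnegative for every real $t$ (discriminant $\le 0$); combining the two forces $C=n+1$, the double root $t=1/(n+1)$, and your tensor identity $S_{\alpha\gamma}=-\frac{1}{n+1}\Phi_\alpha\Phi_\gamma$. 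So both routes land on the same field $\vW=e^{\Phi/(n+1)}\vV$, but yours makes explicit a rigidity the paper leaves implicit: the hypothesis is vacuous for $C<n+1$ and the Hessian is completely determined. At the completeness step the difference is genuine. The paper introduces $\rho=-e^{-(n+1)\Phi/C^2}$, proves $\vW\rho$ is holomorphic, normalizes to $\vZ_\rho=\frac{i}{\vW\rho}\vW$, shows $\rho\vZ_\rho$ has constant length while $\RE\vZ_\rho$ is tangent to the level sets of $\rho$, and then reparametrizes integral curves; you instead observe that $h_{\gamma\bar\beta}\nabla_\alpha\vW^\gamma=e^{\Phi/(n+1)}\paren{\frac{1}{n+1}\Phi_\alpha\Phi_{\bar\beta}+(n+1)h_{\alpha\bar\beta}}$ is Hermitian (while the $(2,0)$-part of the Lie derivative of the metric vanishes because $\vW$ is holomorphic), so $\RE(i\vW)$ is a Killing field, and you invoke the standard fact that Killing fields of complete Riemannian metrics are complete. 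The two constructions give the same vector field up to a positive constant (indeed $\vW\rho\equiv n+1$ once $C=n+1$, so $\vZ_\rho=\frac{1}{n+1}\,i\vW$). Your argument is shorter and more conceptual, at the cost of importing the Killing-completeness theorem; the paper's level-set and reparametrization argument is self-contained and is phrased to cover all $C\le n+1$ uniformly without first ruling out $C<n+1$.
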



\subsection{Affine scaling limits and potential scaling limits} 
As we mentioned in Introduction,  the affine scaling method for a sequence $\set{f_j}$ of automorphisms of $\Omega$ is to take affine mapping $A_j$ so that $\set{A_j\circ f_j}$ converges to a holomorphic imbedding. The scaling method due to S.~Frankel~\cite{Frankel} is to choose $A_j$ as
\begin{equation*}
A_j(z)= (df_j(p_j))^{-1}(z-f_j(p_j))
\end{equation*}
where $p_j$ lies on a fixed compact subset $K$ of $\Omega$. If $\Omega$ is convex or the boundary of an orbit accumulating point is locally convex, then the scaling sequence
\begin{equation} \label{eqn:Frankel scaling}
A_j\circ f_j (z) 
	= (df_j(p_j))^{-1}(f_j(z)-f_j(p_j))
\end{equation}
converges to a biholomorphic imbedding (see  K.T.~Kim~\cite{KTK1990}).

\medskip

Suppose that $\set{A_j\circ f_j}$ in \eqref{eqn:Frankel scaling} converges to a biholomorphism $F:\Omega\to\Omega'$. Since each holomorphic Jacobian determinant of $A_j\circ f_j$  is of the form
\begin{equation*}
J_{A_j\circ f_j}
	=J_{A_j}J_{f_j}=\frac{J_{f_j}}{J_{f_j}(p_j)} \;,
\end{equation*}
we have
\begin{equation*}
\frac{J_{f_j}}{J_{f_j}(p_j)}
	\to J_F \;.
\end{equation*}
This implies that $J_F$ is the same as $\eta$ in \eqref{eqn:scaling limit}. 

Let $\oKE=ih_{\alpha\bar\beta}dz^\alpha\wedge dz^{\bar\beta}$ and $\oKE'=ih'_{\alpha\bar\beta}dz^\alpha\wedge dz^{\bar\beta}$ be the \KE metrics of $\Omega$ and $\Omega'$, respectively. Since $F^*\oKE'=\oKE$, we have
\begin{equation*}
(\psi'\circ F) \abs{J_F}^2=\psi
\end{equation*}
where $\psi=\det(h_{\alpha\bar\beta})$ and $\psi'=\det(h'_{\alpha\bar\beta})$. The scaling limit $\psi_\infty$ as in  \eqref{eqn:scaling limit} is indeed the pulling-back of $\psi'$ by $F$ in the sense of
\begin{equation*}
\psi_\infty 
	= \frac{\psi}{\psi(p)}\frac{1}{\abs{\eta}^2} 
	= \frac{\psi}{\psi(p)}\frac{1}{\abs{J_F}^2} 
	= \frac{1}{\psi(p)}\psi'\circ F 
	\;.
\end{equation*}

As a conclusion, the potential scaling limit $\log\psi_\infty$ is the pulling-back of the canonical potential function $\psi'=\det(h'_{\alpha\bar\beta})$ of the limit domain $\Omega'$ by the affine scaling limit $F$.


\subsection{The scaling of Bergman kernel functions}
For a bounded domain $\Omega$ in $\CC^n$, the \emph{Bergman metric} $\oB$ of $\Omega$ is defined by
\begin{equation*}
\oB=d\dc\log K_\Omega
\end{equation*}
where $K_\Omega=K_\Omega(z,z)$ is the Bergman kernel function of $\Omega$ (\cite{Bergman1950}). By the transformation formular of $K_\Omega$ under $f\in\Aut(\Omega)$, namely
\begin{equation}\label{eqn:Bergman transformation}
(K_\Omega\circ f)\abs{J_f}^2 =K_\Omega \;,
\end{equation}
the Bergman metric is invariant under the action by $\Aut(\Omega)$. Thus every automorphism of $\Omega$ is an isometry with respect to $\oB$. Identity~\eqref{eqn:Bergman transformation} also implies the analogue of Theorem~\ref{thm:main thm1} as following.

\begin{theorem}\label{thm:main thm1'}
Let $\oB=ig_{\alpha\bar\beta}dz^\alpha\wedge dz^{\bar\beta}$ be the Bergman metric of the bounded domain $\Omega$. Suppose that the length of $d\log K_\Omega$ with respect to $\oB$ is bounded, i.e. there is a constant $C>0$ with
\begin{equation}\label{cond:Bergman KH}
\norm{d\log K_\Omega}_{\oB}^2=\pd{\log K_\Omega}{z^\alpha}\pd{\log K_\Omega}{z^{\bar\beta}}g^{\alpha\bar\beta}<C^2
\end{equation}
on $\Omega$. For any compact subset $K$ in $\Omega$, the collection of holomorphic functions,
\begin{equation*}
\mathcal{F}_K=\set{ \frac{J_f}{J_f(p)}: f\in\Aut(\Omega), p\in K} \;,
\end{equation*}
is a normal family. Moreover any limit of a convergent sequence in $\mathcal{F}_K$ is a nowhere vanishing holomorphic function.
\end{theorem}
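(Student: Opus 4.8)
The plan is to run the Bergman analogue of the argument behind Theorem~\ref{thm:main thm1}. By Montel's theorem, normality of $\mathcal{F}_K$ will follow from a locally uniform bound on $\abs{J_f/J_f(p)}$, and the same estimate read from below will force every limit to be nowhere vanishing. Fix $f\in\Aut(\Omega)$ and $p\in K$. Since $f$ is a biholomorphism, $J_f$ is nowhere zero, so
\[
u_f:=\log\Big|\tfrac{J_f}{J_f(p)}\Big|^2=\log\abs{J_f}^2-\log\abs{J_f(p)}^2
\]
is a real pluriharmonic function on $\Omega$ with $u_f(p)=0$.

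First I would establish a uniform gradient estimate. Taking logarithms in the transformation formula \eqref{eqn:Bergman transformation} gives $\log\abs{J_f}^2=\log K_\Omega-\log(K_\Omega\circ f)$, and applying $d$ yields
\[
d\log\abs{J_f}^2=d\log K_\Omega-f^*(d\log K_\Omega).
\]
Because each automorphism is a $\oB$-isometry, $f^*$ is an isometry on $1$-forms, so $\norm{f^*(d\log K_\Omega)}_{\oB}(z)=\norm{d\log K_\Omega}_{\oB}(f(z))$. Both this and $\norm{d\log K_\Omega}_{\oB}(z)$ are $<C$ by \eqref{cond:Bergman KH}, and since $u_f$ differs from $\log\abs{J_f}^2$ by a constant, the triangle inequality gives
\[
\norm{du_f}_{\oB}<2C\qquad\text{on }\Omega,
\]
uniformly in $f$ and $p$.

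The second step turns this into a pointwise bound. A function whose $\oB$-gradient is bounded by $2C$ is Lipschitz for the Bergman distance $d_{\oB}$ with constant $2C$ (up to the fixed factor relating the Hermitian and Riemannian norms): integrating $du_f$ along a path from $p$ to $z$ and using $u_f(p)=0$ gives $\abs{u_f(z)}\le 2C\,d_{\oB}(p,z)$. Now fix any compact $K'\subset\Omega$. As $\oB$ is a smooth Hermitian metric, the distance $d_{\oB}$ is continuous and hence bounded by some $D<\infty$ on the compact set $(K\cup K')\times(K\cup K')$; crucially, this needs only finiteness of distances in the interior, not completeness of $\oB$. Therefore $\abs{u_f(z)}\le 2CD$, i.e.
\[
e^{-CD}\le\Big|\tfrac{J_f(z)}{J_f(p)}\Big|\le e^{CD}\qquad(z\in K'),
\]
for all $f\in\Aut(\Omega)$ and $p\in K$. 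The upper bound makes $\mathcal{F}_K$ locally uniformly bounded, so it is normal; the lower bound is inherited by any locally uniform limit $\eta$, giving $\abs{\eta}\ge e^{-CD}>0$ on $K'$, and as $K'$ is arbitrary, $\eta$ is nowhere vanishing.

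The only substantive point is the gradient estimate: it is the isometry property of automorphisms that lets me trade the a priori uncontrolled growth of $J_f$ for the bounded quantity in \eqref{cond:Bergman KH}. Everything else is the standard Montel packaging, the one subtlety being that this argument needs only boundedness of $d_{\oB}$ on compacta rather than completeness of the Bergman metric---which is exactly what allows the \KE proof of Theorem~\ref{thm:main thm1} to carry over verbatim to the Bergman setting.
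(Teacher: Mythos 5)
Your proof is correct, and it runs on the same engine as the paper's: automorphisms are $\oB$-isometries, so the bounded-gradient hypothesis \eqref{cond:Bergman KH} produces a uniform Lipschitz bound for a logarithmic quantity, which integrates along paths into two-sided bounds for $\abs{J_f/J_f(p)}$ on compacta; Montel's theorem then yields normality. Your decomposition, however, differs from the paper's in a way worth recording. The paper (which proves Theorems~\ref{thm:main thm1} and \ref{thm:main thm1'} simultaneously) first establishes Lemma~\ref{lemma:boundedness}, a Gronwall-type estimate for the scaled potential $\sigma_{f,p}=(\varphi\circ f)/(\varphi\circ f)(p)$ with $\varphi=K_\Omega$: the isometry property gives $\norm{d\sigma_{f,p}}_{\oB}\leq C\abs{\sigma_{f,p}}$, hence $e^{-CR}\leq\sigma_{f,p}\leq e^{CR}$ on $K'$, using the hypothesis only at $f(z)$. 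Only afterwards does it invoke the transformation formula \eqref{eqn:Bergman transformation} to convert this into bounds on $\abs{J_f/J_f(p)}^2$, which costs an extra factor $\varphi/\varphi(p)$ that must be pinched on $K'$ by continuity of $K_\Omega$; nonvanishing of limits is then deduced from Hurwitz's theorem. You instead apply the transformation formula at the outset, writing $\log\abs{J_f}^2=\log K_\Omega-\log(K_\Omega\circ f)$, so that the hypothesis (used twice, at $z$ and, via the isometry, at $f(z)$) bounds the differential of $\log\abs{J_f/J_f(p)}^2$ by $2C$ directly. This eliminates both the pinching step and the appeal to Hurwitz --- your explicit lower bound $e^{-CD}$ does that job --- at the harmless price of a doubled Lipschitz constant and the norm-convention factor you correctly flag. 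Finally, both arguments use only finiteness and continuity of $d_{\oB}$ on compacta, never completeness of $\oB$, so your closing observation is exactly the point that lets the paper dispatch the \KE and Bergman cases with a single proof.
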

As same as the scaling of \KE potentials, this theorem implies the convergence of 
\begin{equation*}
\log\frac{K_\Omega\circ f_j}{(K_\Omega\circ f_j)(p_j)} \;,
\end{equation*}
under Assumption~\eqref{cond:Bergman KH}.

\begin{remark}\label{rmk:KH}
Assumptions \eqref{cond:KE KH} and \eqref{cond:Bergman KH} are sufficient conditions for $\oKE$ and $\oB$ to be K\"ahler hyperbolic, respectively. Let $(X,\omega)$ be a $n$-dimensional K\"ahler manifold. If there is a global $1$-form $\theta$ such that $d\theta=\omega$ and the length $\norm{\theta}_\omega$ of $\theta$ with respect to $\omega$ is bounded on $X$, then $(X,\omega)$ is called  \emph{K\"ahler hyperbolic} due to M.~Gromov~\cite{Gromov}. The K\"ahler-hyperbolicity implies the vanishing of the space of $L^2$ harmonic $(p, q)$ forms (see \cite{Donnelly1994} also).

Suppose that there is a global K\"ahler potential $\Phi:X\to\RR$ of $\omega$, i.e. $d\dc\Phi=\omega$. If $\norm{d\Phi}_\omega$ is bounded on $X$, then $\omega$ is K\"ahler hyperbolic because $\norm{\dc\Phi}_\omega=2\norm{d\Phi}_\omega$. For a bounded pseudoconvex domain $\Omega$, the typical invariant K\"ahler metric has a canonical global potential, so the study on the K\"ahler hyperbolicity of domains has been concentrated on the  length of differential of the canonical potentials (see \cite{Donnelly1997,Yeung}).
\end{remark}


\section{Convergence of potential scaling}\label{sec:convergence}

In order to prove Theorem~\ref{thm:main thm1} and Theorem~\ref{thm:main thm1'}, we have the following basic estimate.
\begin{lemma}\label{lemma:boundedness}
Let $\Omega$ be a domain in $\CC^n$ with a holomorphically invariant K\"ahler metric $\omega$.  Suppose that there is a positive-valued function $\varphi:\Omega\to\RR$ satisfying
\begin{equation}\label{eqn:boundedness}
\norm{d\log\varphi}_\omega<C \quad\text{on $\Omega$}
\end{equation}
for some $C>0$. Then for each compact subsets $K$ and $K'$ in $\Omega$, there is $A>0$ such that
\begin{equation*}
\frac{1}{A}\leq\frac{\varphi\circ f}{(\varphi\circ f)(p)}\leq A \quad\text{on $K'$}
\end{equation*}
for any $f\in\Aut(\Omega)$ and $p\in K$.
\end{lemma}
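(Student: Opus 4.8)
The plan is to take logarithms and convert the desired two-sided multiplicative bound into a uniform additive Lipschitz estimate for the pulled-back potentials. Setting $u_f = \log(\varphi\circ f) = f^*\log\varphi$, the conclusion is equivalent to producing a constant $\log A$, independent of $f$ and $p$, with
\[
\abs{u_f(q)-u_f(p)}\leq \log A \quad\text{for all } q\in K',\ p\in K,\ f\in\Aut(\Omega),
\]
since exponentiating gives exactly $A^{-1}\leq (\varphi\circ f)(q)/(\varphi\circ f)(p)\leq A$ on $K'$. The differential of $u_f$ is $f^*(d\log\varphi)$, so everything hinges on controlling $\norm{d u_f}_\omega$ uniformly in $f$.

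The key observation is that hypothesis \eqref{eqn:boundedness} propagates to every pullback precisely because $\omega$ is holomorphically invariant. Invariance means each $f\in\Aut(\Omega)$ satisfies $f^*\omega=\omega$, so $f$ is an isometry of $(\Omega,\omega)$; the induced pullback on cotangent spaces then preserves the metric-induced norm, and therefore
\[
\norm{d u_f}_\omega(p)=\norm{f^*(d\log\varphi)}_\omega(p)=\norm{d\log\varphi}_\omega(f(p))<C
\]
at every $p\in\Omega$. Thus the entire family $\set{u_f : f\in\Aut(\Omega)}$ consists of functions whose $\omega$-gradient has length strictly less than the \emph{single} constant $C$ everywhere on $\Omega$. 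This uniformity over the whole automorphism group is the heart of the matter.

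The next step is the standard passage from a uniform gradient bound to a Lipschitz estimate for the Riemannian distance $d_\omega$ of $\omega$. Integrating $du_f$ along any piecewise smooth curve $\gamma$ from $p$ to $q$ inside $\Omega$ and using the metric Cauchy--Schwarz inequality $\abs{\ip{du_f,\dot\gamma}}\leq \norm{du_f}_\omega\,\norm{\dot\gamma}_\omega$ gives
\[
\abs{u_f(q)-u_f(p)}\leq \int \norm{du_f}_\omega\,\norm{\dot\gamma}_\omega\,dt\leq C\cdot\mathrm{length}(\gamma),
\]
and taking the infimum over such curves yields $\abs{u_f(q)-u_f(p)}\leq C\,d_\omega(p,q)$, with the same $C$ for every $f$.

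Finally I would set $D=\sup\set{d_\omega(p,q):p\in K,\ q\in K'}$. Because $\Omega$ is a domain (hence connected) any two of its points are joined by a curve in $\Omega$, so $d_\omega$ is finite-valued, and since the Riemannian distance function is continuous, $D$ is the maximum of a continuous function on the compact set $K\times K'$ and is therefore finite; note that no completeness of $\omega$ is needed here. Then $A=\exp(CD)$ works, giving $\abs{u_f(q)-u_f(p)}\leq CD=\log A$ for all admissible $p,q,f$, which is the claimed bound. The one point demanding care is the isometry-invariance of the norm of the differential in the second step: this is exactly where holomorphic invariance of $\omega$ is indispensable, as it is what turns a pointwise bound on the single function $\log\varphi$ into a bound uniform across the entire (possibly noncompact) automorphism group.
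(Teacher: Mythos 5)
Your proof is correct and takes essentially the same approach as the paper's: both use the isometry property of each $f\in\Aut(\Omega)$ to turn hypothesis \eqref{eqn:boundedness} into a gradient bound uniform over the whole automorphism group, integrate along curves to obtain a bound in terms of the distance $d_\omega$, and conclude by compactness of $K\times K'$. The only cosmetic difference is that you take logarithms first and apply the fundamental theorem of calculus, while the paper keeps $\sigma_{f,p}=(\varphi\circ f)/(\varphi\circ f)(p)$ and invokes Gronwall's inequality on $\abs{(\sigma_{f,p}\circ\gamma)'}\leq C(\sigma_{f,p}\circ\gamma)$ --- two phrasings of the same estimate.
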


\begin{proof}
Let $f\in\Aut(\Omega)$ and $p\in K$. The automorphism $f$ is isometric with respect to $\omega$ so that $\norm{d(\varphi\circ f)}_\omega^2
		=\norm{d\varphi}_\omega^2\circ f$.
Since $\varphi$ is positive on $\Omega$, we have $d\varphi=\varphi (d\log\varphi)$; hence
\begin{equation}\label{eqn:basic inequality}
\norm{d(\varphi\circ f)}_\omega^2
	= \norm{d\varphi}_\omega^2\circ f
	= (\varphi\circ f)^2 \paren{\norm{d\log\varphi}_\omega^2\circ f}
	\;.
\end{equation}
For the sake of simplicity, let us denote by
\begin{equation*}
\sigma_{f,p}=\frac{\varphi\circ f}{(\varphi\circ f)(p)} \;.
\end{equation*}
Assumption~\eqref{eqn:boundedness} and Identity~\eqref{eqn:basic inequality}  imply that
\begin{align*}
\norm{d\sigma_{f,p}}_\omega^2
	&= \frac{1}{\abs{(\varphi\circ f)(p)}^2}\norm{d(\varphi\circ f)}_\omega^2
	= \abs{\frac{\varphi\circ f}{(\varphi\circ f)(p)}}^2  \paren{\norm{d\log\varphi}_\omega^2\circ f}
	\\
	&\leq  C^2\abs{\sigma_{f,p}}^2 
	\;.
\end{align*}
For a unit speed curve $\gamma:(-R,R)\to\Omega$ with respect to $\omega$ with $\gamma(0)=p$, this inequality implies that the positive-valued function $\sigma_{f,p}\circ\gamma:\RR\to\RR$ satisfies
\begin{equation*}
\abs{(\sigma_{f,p}\circ\gamma)'(t)}\leq C(\sigma_{f,p}\circ\gamma)(t) \;.
\end{equation*}
Since $(\sigma_{f,p}\circ\gamma)(0)=\sigma_{f,p}(p)=1$, Gronwall's inequality gives
\begin{equation*}
e^{-C t}\leq
	(\sigma_{f,p}\circ\gamma)(t)
	\leq e^{Ct} \;.
\end{equation*}
For a point $q\in\Omega$ with $d_\omega(p,q)<R$ where $d_\omega$ is the distance associated to $\omega$, we get
\begin{equation*}
e^{-CR}\leq\sigma_{f,p}(q)\leq e^{CR} \;.
\end{equation*}
This is independent of any choice of $f\in\Aut(\Omega)$ and $p\in K$. Let $K'$ be a compact subset of $\Omega$ and let $R=\sup\{d_\omega(p,q): p\in K, q\in K'\}$. Then we have
\begin{equation*}
e^{-CR}\leq\sigma_{f,p}(q)\leq e^{CR} \quad\text{for any  $q\in K'$}\;.
\end{equation*}
This completes the proof.
\end{proof}

\begin{remark}
In this proof, we only use the fact that $f\in\Aut(\Omega)$ is the isometry of $\omega$. Therefore the estimate in Lemma~\ref{lemma:boundedness} holds for any isometry $f$ of $(\Omega,\omega)$.
\end{remark}

Now we present proofs of Theorem~\ref{thm:main thm1} and Theorem~\ref{thm:main thm1'}.

\medskip

\noindent\textit{Proof of Theorem~\ref{thm:main thm1} and Theorem~\ref{thm:main thm1'}.} We will prove both theorems simultaneously. Let $\omega$ be a invariant K\"ahler metric of $\Omega$:
\begin{enumerate}
\item If $\omega$ is the complete \KE metric $\oKE$, let $\varphi=\det(h_{\alpha\bar\beta})$ where $\oKE=ih_{\alpha\bar\beta}dz^\alpha\wedge dz^{\bar\beta}$.
\item If $\omega$ is the Bergman metric $\oB$, let $\varphi=K_\Omega$.
\end{enumerate}
In both cases, Equations  \eqref{eqn:KE transformation} and  \eqref{eqn:Bergman transformation}  imply that
\begin{equation}\label{eqn:transformation formulae}
(\varphi\circ f) \abs{J_f}^2=\varphi
\end{equation}
for any $f\in\Aut(\Omega)$. 

Now let us assume that
\begin{equation*}
\norm{d\log\varphi}_{\omega}<C \quad\text{on $\Omega$.}
\end{equation*} 
for a constant $C>0$ and let $K$ be a compact subset of $\Omega$. Lemma~\ref{lemma:boundedness} implies that for any compact subset $K'\subset\Omega$, we have $A>0$ such that
\begin{equation*}
\frac{1}{A}
	\leq
	\frac{\varphi\circ f}{\varphi(f(p))}
	\leq A \quad\text{on $K'$,}
\end{equation*}
for any $f\in\Aut(\Omega)$ and $p\in K$. By the transformation formular~\eqref{eqn:transformation formulae}, we have
\begin{equation*}
\frac{1}{A}\leq
	\frac{\varphi\circ f}{\varphi(f(p))}
	=\frac{\varphi}{\varphi(p)} \abs{\frac{J_f(p)}{J_f}}^2
	\leq A
\quad\text{on $K'$.}
\end{equation*}
In other words,
\begin{equation*}
\frac{1}{A}\frac{\varphi}{\varphi(p)} 
\leq\abs{\frac{J_f}{J_f(p)^2}}^2
\leq A\frac{\varphi}{\varphi(p)} \quad\text{on $K'$.}
\end{equation*}
Since the function $\varphi/\varphi(p)$ is pinched by positive constants on $K'$ independent of the choice of $p\in K$, we can conclude that every element of
\begin{equation*}
\mathcal{F}_K=\set{\frac{J_f}{J_f(p)}:f\in\Aut(\Omega),p\in K}
\end{equation*}
is uniformly bounded on $K'$. Since $K'$ is arbitrary, Montel's theorem implies that $\mathcal{F}_K$ is normal.

Since $J_f/J_f(p)=1$ at $p$, a limit holomorphic function of $\set{J_f/J_f(p):f\in\Aut(\Omega)}$ is nowhere vanishing by Hurwitz's theorem. This completes the proof of Theorem~\ref{thm:main thm1} and Theorem~\ref{thm:main thm1'}. \qed


\section{Existence of complete holomorphic vector fields}\label{sec:existence}

In this section, we will prove Theorem~\ref{thm:main thm2}. Throughout this section, we will use the symbol $\Phi$ as a potential function of the \KE metric, instead of $\log\tilde\psi$  as in Theorem~\ref{thm:main thm2}.

\subsection{Covariant derivatives of potential functions}
Let $\Omega$ be a bounded pseudoconvex domain in $\CC^n$ with the \KE metric $\oKE=ih_{\alpha\bar\beta} dz^\alpha \wedge dz^{\bar\beta}$.  Suppose that there is a potential function $\Phi:\Omega\to\RR$ in the sense that $d\dc\Phi= (n+1)\oKE$. We will compute covariant derivatives of 
\begin{equation*}
\norm{d\Phi}_{\oKE}^2=\Phi_\alpha \Phi_{\bar\beta} h^{\alpha\bar\beta} = \Phi_\alpha\Phi^\alpha \;.
\end{equation*}
As above, we will  use the matrix of the K\"aher metric $(h_{\alpha\bar\beta})$ and its inverse matrix $(h^{\bar\beta\alpha})$ to raise and lower indices. We will denote coordinate vector fields by $\partial_\alpha = \pdl{}{z^\alpha}$ and $\partial_{\bar\alpha} = \pdl{}{z^{\bar\alpha}} $ for $\alpha=1,\ldots,n$ and covariant derivatives by $\nabla_A=\nabla_{\partial_A}$ for $A=1,\ldots,n,\bar{1},\ldots,\bar{n}$ where $\nabla$ is the K\"ahler connection of $\oKE$.  Especially covariant derivatives of $\Phi$ will be denoted by $\Phi_{A}=\nabla_A \Phi=\partial_A\Phi$, $\Phi_{AB}=\nabla_B \nabla_A \Phi$ and so on. Since $\Phi$ is real-valued, $\overline{\Phi_{AB\cdots}} = \Phi_{\bar A \bar B \cdots}$.

The \emph{connection form} $(\ind{\theta}{\beta}{\alpha}{})$ of $\oKE$ is the collection of $1$-forms uniquely determined  by the metric compatibility condition
\begin{equation}\label{eqn:metric comp}
dh_{\alpha\bar\beta}
	=\ind{\theta}{\alpha}{\gamma}{}h_{\gamma\bar\beta}+\ind{\theta}{\bar\beta}{\bar\gamma}{}h_{\bar\gamma\alpha}
\quad\text{for any $\alpha,\beta$}
\end{equation}
where $\ind{\theta}{\bar\beta}{\bar\gamma}{}=\overline{\ind{\theta}{\beta}{\gamma}{}}$ and the torsion-free condition
\begin{equation}\label{eqn:torsion free}
0=dz^\beta\wedge\ind{\theta}{\beta}{\alpha}{}
\quad\text{for any $\alpha$.}
\end{equation}
The $1$-form $\ind{\theta}{\beta}{\alpha}{}$ is of type $(1,0)$ so $\ind{\theta}{\beta}{\alpha}{}= (\partial h_{\beta\bar\gamma}) h^{\bar\gamma\alpha}$. Then the K\"ahler connection $\nabla$ can be given by 
\begin{align*}
\nabla \partial_\alpha &= \ind{\theta}{\alpha}{\beta}{}\otimes\partial_\beta\;,
	& \nabla \partial_{\bar\alpha} &= \ind{\theta}{\bar\alpha}{\bar\beta}{}\otimes\partial_{\bar\beta}\;, 
	\\
	\nabla\dz^\alpha&=-\ind{\theta}{\beta}{\alpha}{}\otimes\dz^\beta\;,
	& \nabla\dz^{\bar\alpha}&=-\ind{\theta}{\bar\beta}{\bar\alpha}{}\otimes\dz^{\bar\beta} \;.
\end{align*}
For second order covariant derivatives 
\begin{align*}
\Phi_{\alpha\beta}
	&=\nabla_\beta\nabla_\alpha\Phi
	=\partial_\beta\Phi_\alpha-\ind{\theta}{\alpha}{\gamma}{}(\partial_\beta)\Phi_\gamma
	\;,
	\\
\Phi_{\alpha\bar\beta}
	&=\nabla_{\bar\beta}\nabla_\alpha\Phi
	=\partial_{\bar\beta}\partial_\alpha\Phi=(n+1)h_{\alpha\bar\beta}
\end{align*}
of $\Phi$ can be obtained by
\begin{equation}\label{eqn:second cd}
d\Phi_\alpha-\ind{\theta}{\alpha}{\beta}{}\Phi_\beta
	=\Phi_{\alpha\beta}dz^\beta+\Phi_{\alpha\bar\beta}dz^{\bar\beta}
\;.
\end{equation}
Note that $\Phi_{\alpha\beta}=\Phi_{\beta\alpha}$ since $\nabla$ is torsion-free. Differentiaing Equation~\eqref{eqn:second cd}, we have
\begin{equation*}
-(d\ind{\theta}{\alpha}{\beta}{})\Phi_\beta
+\ind{\theta}{\alpha}{\beta}{}\wedge d\Phi_\beta
=d\Phi_{\alpha\beta}\wedge dz^\beta
+d\Phi_{\alpha\bar\beta}\wedge dz^{\bar\beta}
\;.
\end{equation*}
Applying \eqref{eqn:torsion free} and \eqref{eqn:second cd}, this can be written by 
\begin{multline*}
-\paren{
	d\ind{\theta}{\alpha}{\beta}{}-\ind{\theta}{\alpha}{\gamma}{}\wedge\ind{\theta}{\gamma}{\beta}{}
	}\Phi_\beta
\\
=
\paren{
	d\Phi_{\alpha\lambda}
	-\Phi_{\alpha\beta}\ind{\theta}{\lambda}{\beta}{}
	-\ind{\theta}{\alpha}{\beta}{}\Phi_{\beta\lambda}
}\wedge dz^\lambda
+\paren{
	d\Phi_{\alpha\bar\mu}
	-\Phi_{\alpha\bar\beta}\ind{\theta}{\bar\mu}{\bar\beta}{}
	-\ind{\theta}{\alpha}{\beta}{}\Phi_{\beta\bar\mu}
}\wedge dz^{\bar\mu} \;.
\end{multline*}
The \emph{curvature form} $(\ind{\Theta}{\alpha}{\beta}{})$ of $\oKE$ is the collection of  $2$-forms 
\begin{equation*}
\ind{\Theta}{\alpha}{\beta}{}
	=d\ind{\theta}{\alpha}{\beta}{}-\ind{\theta}{\alpha}{\gamma}{}\wedge\ind{\theta}{\gamma}{\beta}{}
	=\ind{R}{\alpha}{\beta}{\lambda\bar\mu}\dz^\lambda\wedge\dz^{\bar\mu} 
\end{equation*}
where $\paren{\ind{R}{\alpha}{\beta}{\lambda\bar\mu}}$ stands for the curvature operator in the sense of $(\nabla_\lambda\nabla_{\bar\mu}-\nabla_{\bar\mu}\nabla_\lambda)\partial_\alpha=\ind{R}{\alpha}{\beta}{\lambda\bar\mu}\partial_\beta$. Since $d\Phi_{\alpha\bar\mu}-\Phi_{\alpha\bar\beta}\ind{\theta}{\bar\mu}{\bar\beta}{}-\ind{\theta}{\alpha}{\beta}{}\Phi_{\beta\bar\mu}=0$ by the metric compatibility \eqref{eqn:metric comp}, 
we get
\begin{align*}
-\ind{\Theta}{\alpha}{\beta}{}\Phi_\beta
=
\paren{
	d\Phi_{\alpha\lambda}
	-\Phi_{\alpha\beta}\ind{\theta}{\lambda}{\beta}{}
	-\ind{\theta}{\alpha}{\beta}{}\Phi_{\beta\lambda}
}\wedge\dz^\lambda
\;,
\end{align*}
equivalently,
\begin{equation*}
-\ind{R}{\alpha}{\beta}{\lambda\bar\mu}\Phi_\beta\dz^\lambda\wedge\dz^{\bar\mu}
=
	\Phi_{\alpha\lambda\mu}\dz^\mu\wedge\dz^\lambda
	+\Phi_{\alpha\lambda\bar\mu}\dz^{\bar\mu}\wedge\dz^\lambda
\end{equation*}
The curvature form $(\ind{\Theta}{\alpha}{\beta}{})$ consists of $(1,1)$-forms only so that
\begin{equation}\label{eqn:curvature}
\Phi_{\alpha\beta\gamma} =\Phi_{\alpha\gamma\beta} \;, \quad
\Phi_{\alpha\lambda\bar\mu} =\Phi_\beta\ind{R}{\alpha}{\beta}{\lambda\bar\mu} \;.
\end{equation}

\bigskip
	
Now differentiating $\norm{d\Phi}_{\oKE}^2=\Phi_\alpha\Phi^\alpha$, we have
\begin{multline*}
d\norm{d\Phi}_{\oKE}^2
	=(d\Phi_\alpha)\Phi^\alpha+\Phi_\alpha(d\Phi^\alpha)
	=
	\paren{d\Phi_\alpha-\ind{\theta}{\alpha}{\beta}{}\Phi_\beta}\Phi^\alpha
	+\Phi_\alpha\paren{d\Phi^\alpha+\Phi^\beta\ind{\theta}{\beta}{\alpha}{}}
	\\
	=
	\paren{
		\Phi_{\alpha\beta}\Phi^\alpha
		+\Phi_\alpha\ind{\Phi}{}{\alpha}{\beta}
	}\dz^\beta
	+\paren{
		\Phi_{\alpha\bar\beta}\Phi^\alpha
		+\Phi_\alpha\ind{\Phi}{}{\alpha}{\bar\beta}
	}\dz^{\bar\beta}
\end{multline*}
so that
\begin{equation}\label{eqn:first derivative}
\partial\norm{d\Phi}_{\oKE}^2 = \paren{
		\Phi_{\alpha\beta}\Phi^\alpha
		+\Phi_\alpha\ind{\Phi}{}{\alpha}{\beta}
	}\dz^\beta
	\;.
\end{equation}
We can get
\begin{multline*}
d\partial \norm{d\Phi}_{\oKE}^2
	=
	\paren{
		d\Phi_{\alpha\lambda}
		-\ind{\theta}{\alpha}{\gamma}{}\Phi_{\gamma\lambda}
		-\Phi_{\alpha\gamma}\ind{\theta}{\lambda}{\gamma}{}
	}\wedge\Phi^\alpha\dz^\lambda
	+\Phi_{\alpha\lambda}
	\paren{
		d\Phi^\alpha
		+\Phi^\gamma\ind{\theta}{\gamma}{\alpha}{}
	}\wedge\dz^\lambda
	\\
	+\paren{
		d\Phi_\alpha
		-\ind{\theta}{\alpha}{\gamma}{}\Phi_\gamma
	}\wedge\ind{\Phi}{}{\alpha}{\lambda}\dz^\lambda
	+\Phi_\alpha
	\paren{
		d\ind{\Phi}{}{\alpha}{\lambda}
		+\ind{\Phi}{}{\gamma}{\lambda}\ind{\theta}{\gamma}{\alpha}{}
		-\ind{\theta}{\lambda}{\gamma}{}\ind{\Phi}{}{\alpha}{\gamma}
	}\wedge\dz^\lambda
\end{multline*}
differentiating \eqref{eqn:first derivative}, then gathering $(1,1)$-forms we have
\begin{multline*}
\bar\partial\partial\norm{d\Phi}_{\oKE}^2
	=
	\Phi_{\alpha\lambda\bar\mu}\dz^{\bar\mu}\wedge\Phi^\alpha\dz^\lambda
	\\
	+\Phi_{\alpha\lambda}\ind{\Phi}{}{\alpha}{\bar\mu}\dz^{\bar\mu}\wedge\dz^\lambda
	+\Phi_{\alpha\bar\mu}\dz^{\bar\mu}\wedge\ind{\Phi}{}{\alpha}{\lambda}\dz^\lambda
	+\Phi_\alpha\ind{\Phi}{}{\alpha}{\lambda\bar\mu}\dz^{\bar\mu}\wedge\dz^\lambda
\end{multline*}
Since $\ind{\Phi}{}{\alpha}{\lambda\bar\mu}=h^{\alpha\bar\beta}\Phi_{\bar\beta\lambda\bar\mu}=0$,  
\begin{equation}\label{eqn:ddbar u}
\partial\bar\partial \norm{d\Phi}_{\oKE}^2 = -\bar\partial\partial \norm{d\Phi}_{\oKE}^2
	=
	\paren{
		\Phi_{\alpha\lambda\bar\mu}\Phi^\alpha
		+\Phi_{\alpha\lambda}\ind{\Phi}{}{\alpha}{\bar\mu}
		+\Phi_{\alpha\bar\mu}\ind{\Phi}{}{\alpha}{\lambda}
		}\dz^\lambda\wedge\dz^{\bar\mu}
	\;.
\end{equation}

Now we have
\begin{proposition}
If $\norm{d\Phi}_{\oKE}^2=\Phi_\alpha\Phi^\alpha$ is constant on $\Omega$, then
\begin{equation}\label{eqn:identity1}
\Phi_{\alpha\beta}\Phi^\alpha
		=-(n+1)\Phi_\beta
\end{equation}
and
\begin{equation}\label{eqn:identity2}
\Phi_{\alpha\lambda}\Phi^{\alpha\lambda}
		=(n+1)\Phi_\alpha\Phi^\alpha
		-n(n+1)^2
		\;.
\end{equation}
\end{proposition}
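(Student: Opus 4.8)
The plan is to read off both identities directly from the two differential formulas \eqref{eqn:first derivative} and \eqref{eqn:ddbar u} that are already established above, by imposing that $\norm{d\Phi}_{\oKE}^2$ is constant. Constancy forces both $\partial\norm{d\Phi}_{\oKE}^2$ and $\partial\bar\partial\norm{d\Phi}_{\oKE}^2$ to vanish identically, and the two identities are exactly what the vanishing of these two expressions says after the low-order covariant derivatives of $\Phi$ are normalized. The only auxiliary inputs I will need are the Einstein-type relation $\Phi_{\alpha\bar\beta}=(n+1)h_{\alpha\bar\beta}$ recorded above, the curvature identity \eqref{eqn:curvature}, and the Einstein normalization $\Ric_{\oKE}=-(n+1)\oKE$.

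First I would treat \eqref{eqn:identity1}. Since mixed second covariant derivatives commute, $\Phi_{\bar\gamma\beta}=\Phi_{\beta\bar\gamma}=(n+1)h_{\beta\bar\gamma}$, so raising an index gives $\ind{\Phi}{}{\alpha}{\beta}=h^{\alpha\bar\gamma}\Phi_{\bar\gamma\beta}=(n+1)\delta^\alpha_\beta$. As $\partial\norm{d\Phi}_{\oKE}^2=0$, the coefficient of $dz^\beta$ in \eqref{eqn:first derivative} must vanish, i.e. $\Phi_{\alpha\beta}\Phi^\alpha=-\Phi_\alpha\ind{\Phi}{}{\alpha}{\beta}=-(n+1)\Phi_\beta$, which is \eqref{eqn:identity1}.

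For \eqref{eqn:identity2} I would start from \eqref{eqn:ddbar u}: vanishing of this $(1,1)$-form means its coefficient vanishes, and I contract that coefficient against $h^{\lambda\bar\mu}$ to produce a scalar identity with three terms. The third term is immediate, $h^{\lambda\bar\mu}\Phi_{\alpha\bar\mu}\ind{\Phi}{}{\alpha}{\lambda}=(n+1)^2h^{\lambda\bar\mu}h_{\alpha\bar\mu}\delta^\alpha_\lambda=n(n+1)^2$, using $\Phi_{\alpha\bar\mu}=(n+1)h_{\alpha\bar\mu}$ and $\ind{\Phi}{}{\alpha}{\lambda}=(n+1)\delta^\alpha_\lambda$. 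The middle term reorganizes, after writing $\ind{\Phi}{}{\alpha}{\bar\mu}=h^{\alpha\bar\gamma}\Phi_{\bar\gamma\bar\mu}$, into exactly the Hessian norm $\Phi_{\alpha\lambda}\Phi^{\alpha\lambda}=h^{\alpha\bar\gamma}h^{\lambda\bar\mu}\Phi_{\alpha\lambda}\Phi_{\bar\gamma\bar\mu}$.

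The one genuinely nontrivial term is the first, $h^{\lambda\bar\mu}\Phi_{\alpha\lambda\bar\mu}\Phi^\alpha$, and this is where I expect the main work to lie. Here I would substitute the curvature identity $\Phi_{\alpha\lambda\bar\mu}=\Phi_\beta\ind{R}{\alpha}{\beta}{\lambda\bar\mu}$ from \eqref{eqn:curvature}, reducing the term to $\Phi_\beta\Phi^\alpha\,h^{\lambda\bar\mu}\ind{R}{\alpha}{\beta}{\lambda\bar\mu}$. The crux is to evaluate the traced curvature $h^{\lambda\bar\mu}\ind{R}{\alpha}{\beta}{\lambda\bar\mu}$: I would pass to the fully lowered K\"ahler curvature tensor, use its symmetry in the holomorphic (resp. antiholomorphic) index pairs to convert this trace into a Ricci contraction, and then invoke $\Ric_{\oKE}=-(n+1)\oKE$ to conclude $h^{\lambda\bar\mu}\ind{R}{\alpha}{\beta}{\lambda\bar\mu}=-(n+1)\delta^\beta_\alpha$. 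The sign must be tracked carefully against the paper's conventions $\Ric_{\oKE}=-d\dc\log\psi$ and the definition of $(\ind{\Theta}{\alpha}{\beta}{})$; concretely, taking the trace $\ind{\Theta}{\alpha}{\alpha}{}=\bar\partial\partial\log\psi$ identifies $\ind{R}{\alpha}{\alpha}{\lambda\bar\mu}$ with the Ricci components, fixing the sign. This gives the first term as $-(n+1)\Phi_\alpha\Phi^\alpha$, and summing the three contributions yields $-(n+1)\Phi_\alpha\Phi^\alpha+\Phi_{\alpha\lambda}\Phi^{\alpha\lambda}+n(n+1)^2=0$, which rearranges to \eqref{eqn:identity2}.
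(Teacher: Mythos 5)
Your proposal is correct and follows essentially the same route as the paper's own proof: identity \eqref{eqn:identity1} is read off from the vanishing of \eqref{eqn:first derivative} together with $\Phi_{\bar\gamma\beta}=(n+1)h_{\bar\gamma\beta}$, and identity \eqref{eqn:identity2} from the vanishing of \eqref{eqn:ddbar u} after substituting the curvature identity \eqref{eqn:curvature}, contracting with $h^{\lambda\bar\mu}$, and invoking the Einstein condition $R_{\alpha\bar\beta\lambda\bar\mu}h^{\lambda\bar\mu}=-(n+1)h_{\alpha\bar\beta}$. The only cosmetic difference is the order of operations (you contract first and then evaluate term by term, while the paper substitutes first and contracts last), which changes nothing of substance.
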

\begin{proof}
Since $\norm{d\Phi}_{\oKE}^2$ is constant, so $\partial \norm{d\Phi}_{\oKE}^2=0$ and $\partial\bar\partial \norm{d\Phi}_{\oKE}^2=0$. From \eqref{eqn:first derivative} and \eqref{eqn:ddbar u}, we have
\begin{equation*}
\Phi_{\alpha\beta}\Phi^\alpha = -\Phi_\alpha\ind{\Phi}{}{\alpha}{\beta} = -\Phi^{\bar\gamma}\Phi_{\bar\gamma\beta} = -(n+1)\Phi^{\bar\gamma}h_{\bar\gamma\beta} = -(n+1)\Phi_\beta \;,
\end{equation*}
and
\begin{equation*}
\Phi_{\alpha\lambda\bar\mu}\Phi^\alpha
		+\Phi_{\alpha\lambda}\ind{\Phi}{}{\alpha}{\bar\mu}
		+\Phi_{\alpha\bar\mu}\ind{\Phi}{}{\alpha}{\lambda}
		=0 \;.
\end{equation*}
From the second identity in \eqref{eqn:curvature} and $\Phi_{\alpha\bar\beta}=(n+1)h_{\alpha\bar\beta}$, the last identity can be written by
\begin{equation*}
\Phi_\beta\ind{R}{\alpha}{\beta}{\lambda\bar\mu}\Phi^\alpha
		+\Phi_{\alpha\lambda}\ind{\Phi}{}{\alpha}{\bar\mu}
		+(n+1)^2h_{\lambda\bar\mu}
		=0 \;.
\end{equation*}
Contracting by $h^{\lambda\bar\mu}$, we have
\begin{equation*}
-(n+1)\Phi_\alpha\Phi^\alpha
		+\Phi_{\alpha\lambda}\Phi^{\alpha\lambda}
		+n(n+1)^2
		=0 \;.
\end{equation*}
from the Einstein condition $R_{\alpha\bar\beta\lambda\bar\mu}h^{\lambda\bar\mu}=R_{\alpha\bar\beta}=-(n+1)h_{\alpha\bar\beta}$. This completes the proof.
\end{proof}


\subsection{Proof of Theorem~\ref{thm:main thm2}}
Now suppose that $\Phi=\Omega\to\RR$ satisfies 
\begin{equation*}
d\dc\Phi=(n+1)\oKE \quad\text{and}\quad
\norm{d\Phi}_{\oKE}\equiv C
\end{equation*}
for some $C>0$. We first consider the vector field $\vV$ of type $(1,0)$ defined by
\begin{equation}\label{eqn:generator of L}
\vV=\Phi^\alpha\partial_\alpha = h^{\alpha\bar\beta}\Phi_{\bar\beta}\pd{}{z^\alpha} \;.
\end{equation}
This $\vV$ has positive constant length: $\norm{\vV}_{\oKE}^2=\Phi^\alpha\Phi^{\bar\beta}h_{\alpha\bar\beta}\equiv C^2$. Thus let us consider the line bundle $L\to\Omega$ generated by $\vV$:
\begin{equation*}
L=\set{v\in T^{1,0}\Omega: \text{$v$ is parellel to $\vV$}} \;.
\end{equation*}
This is a subbudle of $T^{1,0}\Omega$ but not holomorphic in general.

\begin{proposition}
If $\norm{d\Phi}_{\oKE}\equiv C$ for some constant $C$ with $0<C\leq(n+1)$, then $L$ is holomorphic.
\end{proposition}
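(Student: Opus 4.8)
The plan is to reduce the statement to a pointwise algebraic condition on the holomorphic Hessian $(\Phi_{\alpha\lambda})$ of $\Phi$, and then to force that condition by a completing-the-square argument built on Identities~\eqref{eqn:identity1} and~\eqref{eqn:identity2}.

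First I would recall the standard holomorphicity criterion: a smooth line subbundle $L$ of the holomorphic bundle $T^{1,0}\Omega$ is holomorphic if and only if, for a local non-vanishing section spanning $L$, its $\dbar$ takes values in $L$, i.e. the $T^{1,0}\Omega/L$-component of $\dbar\vV$ vanishes. Since $\partial_1,\dots,\partial_n$ is a global holomorphic frame in the standard coordinates, $\dbar\vV=(\partial_{\bar\mu}\Phi^\alpha)\,\dz^{\bar\mu}\otimes\partial_\alpha$; and because the connection form $\ind{\theta}{\beta}{\alpha}{}$ is of type $(1,0)$ and the metric is parallel, $\partial_{\bar\mu}\Phi^\alpha=\nabla_{\bar\mu}\Phi^\alpha=h^{\alpha\bar\beta}\Phi_{\bar\beta\bar\mu}$. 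Hence $\nabla_{\bar\mu}\vV=h^{\alpha\bar\beta}\Phi_{\bar\beta\bar\mu}\partial_\alpha$, and $L$ is holomorphic precisely when, for each $\bar\mu$, this vector is a multiple of $\vV=\Phi^\alpha\partial_\alpha$. Since $\norm{\vV}_{\oKE}^2=C^2>0$ the covector $(\Phi_{\bar\beta})$ never vanishes, so using the symmetry $\Phi_{\bar\beta\bar\mu}=\Phi_{\bar\mu\bar\beta}$ this proportionality is equivalent to the rank-one condition $\Phi_{\alpha\lambda}=c\,\Phi_\alpha\Phi_\lambda$ for a scalar $c$, whose conjugate supplies the required form of $\Phi_{\bar\beta\bar\mu}$.

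The core step is to prove this rank-one identity. I would form the symmetric tensor $T_{\alpha\lambda}=\Phi_{\alpha\lambda}+\tfrac{n+1}{C^2}\Phi_\alpha\Phi_\lambda$ and compute its squared norm $\norm{T}_{\oKE}^2=T_{\alpha\lambda}T^{\alpha\lambda}$, where $T^{\alpha\lambda}=h^{\alpha\bar\gamma}h^{\lambda\bar\delta}T_{\bar\gamma\bar\delta}$. Expanding, the cross terms are evaluated by Identity~\eqref{eqn:identity1}, namely $\Phi_{\alpha\lambda}\Phi^\alpha=-(n+1)\Phi_\lambda$ together with its conjugate, which yield $\Phi_{\alpha\lambda}\Phi^\alpha\Phi^\lambda=\Phi_\alpha\Phi_\lambda\Phi^{\alpha\lambda}=-(n+1)C^2$; the leading term $\Phi_{\alpha\lambda}\Phi^{\alpha\lambda}$ is supplied by Identity~\eqref{eqn:identity2}; and the quartic term uses $\Phi_\alpha\Phi^\alpha=C^2$. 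I expect these to collapse to
\begin{equation*}
\norm{T}_{\oKE}^2=(n+1)\paren{C^2-(n+1)^2}.
\end{equation*}
Because $0<C\le n+1$, the right-hand side is $\le 0$, while a squared norm is $\ge 0$; therefore $\norm{T}_{\oKE}^2=0$, forcing $T_{\alpha\lambda}=0$, that is $\Phi_{\alpha\lambda}=-\tfrac{n+1}{C^2}\Phi_\alpha\Phi_\lambda$ (and, as a by-product, $C=n+1$).

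Finally I would conjugate to get $\Phi_{\bar\beta\bar\mu}=-\tfrac{n+1}{C^2}\Phi_{\bar\beta}\Phi_{\bar\mu}$ and substitute into the formula of the first step, obtaining $\nabla_{\bar\mu}\vV=-\tfrac{n+1}{C^2}\,\Phi_{\bar\mu}\,\vV$, equivalently $\dbar\vV=-\tfrac{n+1}{C^2}(\dbar\Phi)\otimes\vV$. Thus $\dbar\vV$ is a $(0,1)$-form with values in $L$, the $T^{1,0}\Omega/L$-component vanishes, and $L$ is holomorphic; locally a holomorphic frame is obtained by solving $\dbar\log g=\tfrac{n+1}{C^2}\dbar\Phi$, which is possible since the right side is $\dbar$-closed. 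The main obstacle is the middle step: the insight that Identities~\eqref{eqn:identity1} and~\eqref{eqn:identity2}, combined with the bound $C\le n+1$, express the manifestly non-negative quantity $\norm{T}_{\oKE}^2$ as something non-positive and thereby force the Hessian to be rank one. Once that rigidity is in hand, both the bundle-theoretic reduction and the concluding substitution are routine.
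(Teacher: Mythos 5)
Your proof is correct, and its computational core --- Identities \eqref{eqn:identity1} and \eqref{eqn:identity2} substituted into a quadratic expression --- is the same as the paper's, but the logical packaging is genuinely different. The paper leaves $t$ free in the ansatz $e^{t\Phi}\vV$, computes
\begin{equation*}
\norm{\nabla''\paren{e^{t\Phi}\vV}}_{\oKE}^2
	=e^{2t\Phi}\paren{C^4t^2-2(n+1)C^2t+(n+1)C^2-n(n+1)^2}\;,
\end{equation*}
and obtains a real root of the bracket from the discriminant $(n+1)C^4\paren{(n+1)^2-C^2}\geq 0$; that root yields a global nowhere vanishing holomorphic section $e^{t\Phi}\vV$ of $L$. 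Your tensor $T_{\alpha\lambda}=\Phi_{\alpha\lambda}+\tfrac{n+1}{C^2}\Phi_\alpha\Phi_\lambda$ is precisely this same quadratic evaluated at its vertex $t=(n+1)/C^2$: your value $\norm{T}_{\oKE}^2=(n+1)\paren{C^2-(n+1)^2}$ is the minimum of the bracket. Where the paper argues ``the discriminant is $\geq 0$, hence a real root exists,'' you argue ``the minimum is itself a squared norm, hence $\geq 0$, while the hypothesis $C\leq n+1$ makes it $\leq 0$, hence it vanishes.'' Both are valid, but your route buys two things the paper's argument leaves invisible: the explicit rank-one rigidity $\Phi_{\alpha\lambda}=-\tfrac{n+1}{C^2}\Phi_\alpha\Phi_\lambda$ of the holomorphic Hessian, and the by-product that $C=n+1$ is forced, so the case $C<n+1$ of the proposition is in fact vacuous. (This also clarifies a curiosity in the paper's proof: two distinct real roots $t_1\neq t_2$ would give $\Phi_{\bar\alpha\bar\beta}=-t_i\Phi_{\bar\alpha}\Phi_{\bar\beta}$ for both values, forcing $d\Phi\equiv 0$; so the discriminant can never be strictly positive, consistent with your conclusion.) Conversely, the paper's route is shorter at the finish: it exhibits the spanning holomorphic section directly, whereas you must invoke the $\dbar$-criterion for holomorphic subbundles and the local solvability of $\dbar\log g=\tfrac{n+1}{C^2}\dbar\Phi$ --- though that equation in fact has the explicit global solution $g=e^{(n+1)\Phi/C^2}$, which recovers exactly the paper's section.
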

\begin{proof}
We will prove that there is a real number $t$ such that the nowhere vanishing section $e^{t\Phi}\vV$ to $L$ is a holomorphic tangent vector field. Let us consider
\begin{equation*}
\partial_{\bar\beta}\paren{e^{t\Phi}\Phi^\alpha} 
	= te^{t\Phi}\paren{\partial_{\bar\beta}\Phi} \Phi^\alpha
	+e^{t\Phi}\partial_{\bar\beta}\Phi^\alpha
	=e^{t\Phi}\paren{
		t\paren{\partial_{\bar\beta}\Phi} \Phi^\alpha+\partial_{\bar\beta}\Phi^\alpha
		}
	\;.
\end{equation*}
Since $\partial_{\bar\beta}\Phi = \Phi_{\bar\beta}$ and $\ind{\Phi}{}{\alpha}{\bar\beta} = \partial_{\bar\beta}\Phi^\alpha+\ind{\theta}{\gamma}{\alpha}{}(\partial_{\bar\beta})\Phi^\gamma= \partial_{\bar\beta}\Phi^\alpha$, we have
\begin{equation*}
\nabla_{\bar\beta}(e^{t\Phi}\vV)
	=\nabla_{\bar\beta}\paren{e^{t\Phi}\Phi^\alpha\partial_\alpha}
	=\partial_{\bar\beta}\paren{e^{t\Phi}\Phi^\alpha}\partial_\alpha
	=e^{t\Phi}\paren{
		t\Phi_{\bar\beta} \Phi^\alpha+\ind{\Phi}{}{\alpha}{\bar\beta}
		}\partial_\alpha
		\;.
\end{equation*}
When we denote $\nabla''$ by the $(0,1)$-part of $\nabla$, it follows
\begin{equation*}
\nabla''\paren{e^{t\Phi}\vV}
	=\nabla_{\bar\beta}\paren{e^{t\Phi}\vV}\otimes\dz^{\bar\beta}
	=e^{t\Phi}\paren{
		t\Phi_{\bar\beta} \Phi^\alpha+\ind{\Phi}{}{\alpha}{\bar\beta}
		}\partial_\alpha\otimes\dz^{\bar\beta}
		\;.
\end{equation*}
The holomorphicity of $e^{t\Phi}\vV$ is equivalent to the vanishing length of $\nabla''\paren{e^{t\Phi}\vV}$ with respect to $\oKE$ that can be computed by
\begin{multline*}
\norm{\nabla''\paren{e^{t\Phi}\vV}}_{\oKE}^2
	= e^{2t\Phi}\paren{
		t\Phi_{\bar\beta} \Phi^\alpha+\ind{\Phi}{}{\alpha}{\bar\beta}
		}
		\paren{
		t\Phi^{\bar\beta} \Phi_\alpha+\ind{\Phi}{\alpha}{\bar\beta}{}
		}
	\\
	= e^{2t\Phi}\paren{
		t^2\Phi_{\bar\beta} \Phi^\alpha\Phi^{\bar\beta} \Phi_\alpha
		+t\Phi_{\bar\beta} \Phi^\alpha\ind{\Phi}{\alpha}{\bar\beta}{}
		+t\ind{\Phi}{}{\alpha}{\bar\beta}\Phi^{\bar\beta} \Phi_\alpha
		+\ind{\Phi}{}{\alpha}{\bar\beta}\ind{\Phi}{\alpha}{\bar\beta}{}
		}
	\\
	= e^{2t\Phi}\paren{
		t^2 \Phi_\alpha\Phi^\alpha\Phi_{\bar\beta} \Phi^{\bar\beta}
		+t\Phi^{\beta} \Phi^\alpha\Phi_{\alpha\beta}
		+t\Phi_{\bar\alpha\bar\beta}\Phi^{\bar\beta} \Phi^{\bar\alpha}
		+\Phi^{\alpha\beta}\Phi_{\alpha\beta}
		}
	\;.
\end{multline*}
Note that $\Phi_\alpha\Phi^\alpha=\Phi_{\bar\beta} \Phi^{\bar\beta}\equiv C^2$ by the assumption and 
\begin{equation*}
\Phi^{\beta} \Phi^\alpha\Phi_{\alpha\beta}=-(n+1)\Phi^{\beta} \Phi_\beta\equiv-(n+1)C^2
\end{equation*}
by \eqref{eqn:identity1}. From \eqref{eqn:identity2}, we have 
\begin{equation*}
\Phi^{\alpha\beta}\Phi_{\alpha\beta}
	\equiv(n+1)C^2-n(n+1)^2
	\;.
\end{equation*} 
Therefore
\begin{equation*}
\norm{\nabla''\paren{e^{t\Phi}\vV}}_{\oKE}^2
	\equiv e^{2t\Phi}\paren{
		t^2 C^4
		-t2(n+1)C^2
		+(n+1)C^2-n(n+1)^2
		}
		\;.
\end{equation*}
So $e^{t\Phi}\vV$ is holomorphic if and only if $t$ satisfies
\begin{equation*}
 C^4t^2-2(n+1)C^2t+(n+1)C^2-n(n+1)^2=0\;.
\end{equation*}
The discriminant of this quadratic polynomial  is
\begin{equation*}
(n+1)^2C^4-C^4((n+1)C^2-n(n+1)^2)
	=
	(n+1)C^4((n+1)^2-C^2)
	\;.
\end{equation*}
This completes the proof.
\end{proof}

The complete holomorphic vector field in Theorem~\ref{thm:main thm2} is indeed a holomorphic  section to $L\to\Omega$. Let us consider the function $-e^{-\ve\Phi}$. Since
\begin{multline*}
d\dc\paren{-e^{-\ve\Phi}}=-i\partial\bar\partial e^{-\ve\Phi}
	= -i\partial\paren{
		-\ve e^{-\ve\Phi} \bar\partial\Phi
		}
	\\
	= \ve e^{-\ve\Phi}\paren{
	i\partial\bar\partial\Phi
	-\ve  i\partial\Phi\wedge\bar\partial\Phi
	}
	= (n+1)\ve e^{-\ve\Phi}\paren{
	\oKE
	 -\frac{\ve}{(n+1)} i\partial\Phi \wedge  \bar\partial\Phi
	}\;,
\end{multline*}
the Cauchy-Schwarz inequality implies that $-e^{-\ve\Phi}$ is strictly plurisubharmonic for any $\ve<(n+1)/C^2=(n+1)/\norm{d\Phi}_{\oKE}^2$. If $\ve=(n+1)/C^2$, then the vector field $\vV$ annihilates $d\dc(-e^{-\ve\Phi})$ in the sense of
\begin{equation*}
\vV\inp d\dc\paren{-e^{-(n+1)\Phi/C^2}}\equiv 0
\end{equation*}
because $\vV\inp\oKE=ih_{\alpha\bar\beta}\Phi^\alpha\dz^{\bar\beta}=i\Phi_{\bar\beta}\dz^{\bar\beta}=i\bar\partial\Phi$ and $\partial\Phi(\vV)=\Phi_\alpha\Phi^\alpha=C^2$ so that
\begin{multline*}
\vV\inp d\dc\paren{-e^{-(n+1)\Phi/C^2}}
	= \frac{(n+1)^2}{C^2} e^{-(n+1)\Phi/C^2}\paren{
	\vV\inp\oKE
	 -\frac{1}{C^2} i\partial\Phi(\vV)\bar\partial\Phi
	}
	\\
	= \frac{(n+1)^2}{C^2} e^{-(n+1)\Phi/C^2}\paren{
	i\bar\partial\Phi
	 -i\bar\partial\Phi
	}
	=0
	\;.
\end{multline*}
\begin{proposition}
Let 
\begin{equation*}
\rho=-e^{-(n+1)\Phi/C^2} \;.
\end{equation*}
For any (local) holomorphic section $\vZ$ to $L$, the function $\vZ\rho$ is holomorphic.
\end{proposition}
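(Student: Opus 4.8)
The plan is to verify holomorphicity of $\vZ\rho$ coordinatewise, that is, to show $\partial_{\bar\gamma}(\vZ\rho)=0$ for every $\gamma$, and the decisive input will be the annihilation identity $\vV\inp d\dc\rho\equiv 0$ established immediately before the statement. First I would unwind what a holomorphic section of $L$ is. Since $L$ is a holomorphic subbundle of the (holomorphically trivial) bundle $T^{1,0}\Omega$, a holomorphic section $\vZ=\vZ^\alpha\partial_\alpha$ of $L$ is precisely one whose component functions are holomorphic, so that $\partial_{\bar\gamma}\vZ^\alpha=0$; this is exactly the condition $\nabla''\vZ=0$ used in the previous proposition, where $e^{t\Phi}\vV$ served as a holomorphic frame. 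Being a section of $L$, moreover, $\vZ$ is parallel to $\vV=\Phi^\alpha\partial_\alpha$, so locally $\vZ^\alpha=g\,\Phi^\alpha$ for some function $g$ (concretely $\vZ=\lambda\,e^{t\Phi}\vV$ with $\lambda$ holomorphic and $g=\lambda e^{t\Phi}$).

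Next I would differentiate by the Leibniz rule,
\begin{equation*}
\partial_{\bar\gamma}(\vZ\rho)
	=\partial_{\bar\gamma}\paren{\vZ^\alpha\,\partial_\alpha\rho}
	=\paren{\partial_{\bar\gamma}\vZ^\alpha}\paren{\partial_\alpha\rho}
	+\vZ^\alpha\,\partial_\alpha\partial_{\bar\gamma}\rho\;.
\end{equation*}
The first summand drops out at once, since the components $\vZ^\alpha$ are holomorphic.

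For the surviving summand I would recast the annihilation identity in coordinates: writing $d\dc\rho=i\,\partial_\alpha\partial_{\bar\beta}\rho\,dz^\alpha\wedge dz^{\bar\beta}$ and contracting with $\vV=\Phi^\alpha\partial_\alpha$ gives $\vV\inp d\dc\rho=i\,\Phi^\alpha\partial_\alpha\partial_{\bar\beta}\rho\,dz^{\bar\beta}$, so the established fact $\vV\inp d\dc\rho\equiv 0$ reads exactly as $\Phi^\alpha\,\partial_\alpha\partial_{\bar\gamma}\rho=0$ for every $\gamma$. Substituting $\vZ^\alpha=g\,\Phi^\alpha$ then yields $\vZ^\alpha\,\partial_\alpha\partial_{\bar\gamma}\rho=g\,\Phi^\alpha\,\partial_\alpha\partial_{\bar\gamma}\rho=0$. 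Hence $\partial_{\bar\gamma}(\vZ\rho)=0$ for all $\gamma$, i.e.\ $\vZ\rho$ is holomorphic.

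I do not expect a genuine obstacle here: the argument is a one-line Leibniz expansion whose two terms vanish for two separate reasons (holomorphicity of the coefficients, and the contraction identity). The only points requiring care are the correct reading of \emph{holomorphic section of $L$} as having holomorphic components—justified by the trivializability of $T^{1,0}\Omega$ and the holomorphic frame $e^{t\Phi}\vV$ from the preceding proposition—and the elementary bookkeeping of the interior product $\vV\inp d\dc\rho$, for which only the vanishing, not the precise constant, is needed.
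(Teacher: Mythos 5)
Your proof is correct, and it leans on exactly the same two inputs as the paper's: the annihilation identity $\vV\inp d\dc\rho\equiv 0$ established just before the statement, and the fact that a holomorphic section of $L$ has holomorphic component functions while remaining pointwise proportional to $\vV$. The execution, however, is different. The paper argues invariantly: for an arbitrary holomorphic field $\vW$ it applies the formula $d\dc\rho(\vZ,\overline{\vW})=\vZ\big(\dc\rho(\overline{\vW})\big)-\overline{\vW}\big(\dc\rho(\vZ)\big)-\dc\rho([\vZ,\overline{\vW}])$, uses $[\vZ,\overline{\vW}]=0$ (which also needs holomorphicity of both fields), and then splits $\dc$ into its $(1,0)$ and $(0,1)$ parts to conclude that $\overline{\vW}(\vZ\rho)$ equals its own negative, hence vanishes. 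You instead work in coordinates: the annihilation identity reads $\Phi^\alpha\partial_\alpha\partial_{\bar\gamma}\rho=0$ for every $\gamma$, and a single Leibniz expansion of $\partial_{\bar\gamma}\paren{\vZ^\alpha\partial_\alpha\rho}$ leaves only the term $g\,\Phi^\alpha\partial_\alpha\partial_{\bar\gamma}\rho=0$ after the holomorphicity of $\vZ^\alpha$ kills the other. Your route is shorter and more elementary, avoiding both the exterior-derivative formula on commuting fields and the sign-cancellation step; the paper's route never chooses coordinates and delivers the slightly more general-looking (though equivalent) statement $\overline{\vW}(\vZ\rho)\equiv 0$ for all holomorphic $\vW$. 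You were also right to pause on the one genuinely delicate point, namely what ``holomorphic section of $L$'' means; identifying it with holomorphicity of the components, via $\nabla''\vZ=0$ and the holomorphic frame $e^{t\Phi}\vV$ from the preceding proposition, is exactly the justification needed.
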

\begin{proof}
Let $\vW$ be a holomorphic tangent vector field of $\Omega$. We will prove that $\overline{\vW}(\vZ\rho)\equiv 0$. For any properly differentiable function $g:\Omega\to\RR$, $\vZ(\overline \vW g)=\overline \vW (\vZ g)$. Therefore $\vZ\overline \vW=\overline \vW \vZ$ as operators, so $[\vZ,\overline \vW]=0$. Since
\begin{multline*}
dd^c\rho (\vZ,\overline \vW)
	=\vZ \big(\dc\rho (\overline \vW)\big)- \overline \vW \big(\dc\rho(\vZ)\big)-\dc\rho([\vZ,\overline \vW])
	\\
	=\vZ \big(\dc\rho (\overline \vW)\big)- \overline \vW \big(\dc\rho(\vZ)\big)
\;,
\end{multline*}
and the section $\vZ$ to $L$ annihilates $dd^c\rho$, we have
\begin{equation*}
\overline \vW \big(\dc\rho(\vZ)\big) = \vZ \big(\dc\rho (\overline \vW)\big)\;.
\end{equation*}
Since $\dc=\displaystyle{\frac{i}{2}(\bar\partial-\partial)}$ and $\vZ\overline \vW=\vW\overline \vZ$, it follows that
\begin{equation*}
 -\frac{i}{2}\overline \vW (\vZ\rho)= \frac{i}{2}\vZ (\overline \vW \rho) =  \frac{i}{2}\overline \vW ( \vZ\rho)\;.
\end{equation*}
This implies that $\overline \vW ( \vZ\rho)\equiv0$, so as a conclusion $\vZ\rho$ is holomorphic.
\end{proof}

Note that the function $\vV\rho$ is nowhere vanishing since
\begin{multline}\label{eqn:length of Vrho}
\vV\rho= \Phi^\alpha\partial_\alpha \paren{-e^{-(n+1)\Phi/C^2}}
	=\Phi^\alpha\paren{e^{-(n+1)\Phi/C^2} \frac{n+1}{C^2}\Phi_\alpha}
	\\
	=(n+1)e^{-(n+1)\Phi/C^2}=-(n+1)\rho>0
	\;.
\end{multline}
If $\vW$ is a nowhere vanishing local holomorphic section to $L$, then there is non-vanishing smooth function $g$ such that $\vW=g\vV$. Therefore $\vW\rho=g(\vV\rho)$ is a holomorphic function which is nowhere vanishing on its domain. Then we can define the holomorphic vector field $\widetilde{\vW}$ by
\begin{equation*}
\widetilde{\vW}=\frac{i}{\vW\rho} \vW \;.
\end{equation*}
If $\vW'$ is another nonvanishing holomorphic section to $L$, then $\vW'=g \vW$ for some nonvanishing holomorphic function $g$ on an open set where $\vW$ and $\vW'$ are both defined. Moreover
\begin{equation*}
\widetilde \vW'= \frac{i}{\vW'\rho}\vW'=\frac{i}{g\vW\rho}g\vW = \frac{i}{\vW\rho} \vW =\widetilde{\vW} \;.
\end{equation*}
Therefore we can define a global holomorphic vector field $\vZ_\rho$ of $\Omega$ by
\begin{equation}\label{def:vf}
\vZ_\rho=\frac{i}{\vW\rho} \vW
\end{equation}
for any nowhere vanishing holomorphic section $\vW$ to $L$.

\medskip

The following implies Theorem~\ref{thm:main thm2}.

\begin{proposition}
The vector field $\vZ_\rho$ in \eqref{def:vf} is complete.
\end{proposition}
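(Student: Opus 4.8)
The plan is to reduce the completeness of $\vZ_\rho$ to the completeness of $\oKE$ by showing that the real flow of $\vZ_\rho$ is trapped inside the level sets of $\rho$, and that along each such level set $\vZ_\rho$ has bounded $\oKE$-length. I would first record two elementary identities. Applying $\vZ_\rho=\frac{i}{\vW\rho}\vW$ to $\rho$ and using that $\vW\rho$ is holomorphic and nowhere vanishing gives immediately $\vZ_\rho\rho\equiv i$, independent of the choice of the section $\vW$. Since $\rho$ is real-valued, $\overline{\vZ_\rho}\rho=\overline{\vZ_\rho\rho}=-i$, so the real vector field $\RE\vZ_\rho=\vZ_\rho+\overline{\vZ_\rho}$ satisfies $(\RE\vZ_\rho)\rho\equiv 0$. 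Hence $\rho$ is constant along every integral curve of $\RE\vZ_\rho$.

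Next I would control the length of $\vZ_\rho$. Because $\vV$ is a global nowhere vanishing (smooth) section to $L$ with $\vV\rho=-(n+1)\rho$ by \eqref{eqn:length of Vrho}, writing any holomorphic section as $\vW=g\vV$ shows that $\vZ_\rho=\frac{i}{\vV\rho}\vV=\frac{-i}{(n+1)\rho}\vV$ globally. Combined with $\norm{\vV}_{\oKE}^2\equiv C^2$, this yields
\begin{equation*}
\norm{\vZ_\rho}_{\oKE}^2=\frac{C^2}{(n+1)^2\rho^2}\;.
\end{equation*}
In particular $\norm{\vZ_\rho}_{\oKE}$ depends only on the value of $\rho$, so it is constant on each level set $\set{\rho=c}$.

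Now let $X=\RE\vZ_\rho$ and let $\gamma\colon(a,b)\to\Omega$ be a maximal integral curve of $X$, with $c=\rho(\gamma(0))<0$. By the first paragraph, $\rho(\gamma(t))=c$ for all $t$, so the $\oKE$-speed $\norm{X}_g$ of $\gamma$ equals a fixed constant $M_c$ proportional to $\norm{\vZ_\rho}_{\oKE}=\frac{C}{(n+1)\abs{c}}$. Thus if $b<\infty$, the arc $\gamma|_{[0,b)}$ has $\oKE$-length at most $M_c\,b<\infty$, and so it stays in the closed $\oKE$-ball of radius $M_c\,b$ about $\gamma(0)$. Here I invoke the completeness of $\oKE$: by Hopf--Rinow this closed ball is a compact subset of $\Omega$, whence $\gamma([0,b))$ remains in a fixed compact set. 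The escape lemma for flows then contradicts the maximality of $b$, forcing $b=+\infty$; the same reasoning gives $a=-\infty$. Therefore $X=\RE\vZ_\rho$ is complete, which is exactly the completeness of $\vZ_\rho$, and its flow is the desired $1$-parameter family of automorphisms of $\Omega$.

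The heart of the argument — and the only delicate point — is the confinement of the flow to a level set $\set{\rho=c}$ that is bounded away from $\partial\Omega$, on which $\norm{\vZ_\rho}_{\oKE}$ stays bounded. This is what turns a hypothetical finite-time escape into a curve of finite $\oKE$-length, so that the completeness of $\oKE$ can rule it out. The remaining ingredients ($\vZ_\rho\rho\equiv i$, $(\RE\vZ_\rho)\rho\equiv 0$, and the explicit formula for $\norm{\vZ_\rho}_{\oKE}$) are direct consequences of the definition of $\vZ_\rho$ together with \eqref{eqn:length of Vrho}.
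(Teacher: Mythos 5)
Your proof is correct, and it rests on the same two computations as the paper's: the tangency $(\RE\vZ_\rho)\rho\equiv 0$ (the paper gets this by cancellation of $\frac{i}{\vW\rho}\vW\rho$ against its conjugate, you get it via $\vZ_\rho\rho\equiv i$, which is the same thing) and the length formula $\norm{\vZ_\rho}_{\oKE}^2=C^2/\bigl((n+1)^2\rho^2\bigr)$. Where you diverge is the final mechanism. The paper never argues by contradiction on a maximal interval: it observes that the rescaled field $\rho\vZ_\rho$ has \emph{globally} constant length $C/(n+1)$, invokes the standard fact that a bounded vector field on a complete Riemannian manifold is complete to get integral curves $\gamma:\RR\to\Omega$ of $\rho(\RE\vZ_\rho)$, and then uses the constancy of $\rho$ along $\gamma$ to reparametrize, $\sigma(t)=\gamma(ct)$, producing global integral curves of $\RE\vZ_\rho$ directly. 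You instead work with $\RE\vZ_\rho$ itself, whose length is unbounded on $\Omega$ but constant on each level set $\{\rho=c\}$, and run the finite-length argument (Hopf--Rinow compactness of closed balls plus the escape lemma) on each maximal flow line. The two routes buy different things: the paper's rescaling trick reduces everything to a clean quotable fact and is constructive about the global flow, while your argument is more self-contained -- it essentially inlines the proof of that quotable fact, level set by level set -- at the cost of explicitly invoking Hopf--Rinow and the escape lemma. Both are complete proofs; yours makes visible exactly where completeness of $\oKE$ enters (compactness of closed metric balls), which the paper leaves implicit inside the cited standard fact.
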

\begin{proof}
The real part of $\vZ_\rho$ is tangent to $\rho$:
\begin{equation*}
(\RE \vZ_\rho)\rho = (\vZ_\rho+\overline{\vZ_\rho})\rho 
	= \paren{\frac{i}{\vW\rho} \vW-\frac{i}{\overline{\vW}\rho} \overline{\vW}}\rho 
	= 0\;.
\end{equation*}
The length of $\vZ_\rho$ can be locally written by
\begin{equation*}
\norm{\vZ_\rho}_{\oKE}^2 
	= \norm{\frac{i}{\vW\rho} \vW}_{\oKE}^2 
	= \frac{1}{\abs{\vW\rho}^2}\norm{\vW}_{\oKE}^2\;.
\end{equation*}
When we let $\vW=g\vV$ for some $g$,  we have $\norm{\vW}_{\oKE}^2=g^2 \norm{\vV}_{\oKE}^2=g^2 C^2$ and $\abs{\vW\rho}^2= (n+1)^2g^2\rho^2$ from \eqref{eqn:length of Vrho}. This means that
\begin{equation*}
\norm{\vZ_\rho}_{\oKE}^2 = \frac{g^2 C^2}{(n+1)^2g^2\rho^2}=\frac{C^2}{(n+1)^2\rho^2} \;.
\end{equation*}
This implies that $\rho \vZ_\rho$ has constant length $C/(n+1)$. Since the \KE metric $\oKE$ is complete, the vector field $\RE( \rho \vZ_\rho)=\rho(\RE \vZ_\rho)$ is complete.

In order to show the completeness of $\vZ_\rho$, take any integral curve $\gamma:\RR\to X$ of $\rho (\RE\vZ_\rho)$. It satisfies 
\begin{equation*}
\paren{\rho(\RE\vZ_\rho)}\circ \gamma = \dot\gamma
\end{equation*}
equivalently
\begin{equation*}
(\RE \vZ_\rho)\circ \gamma = \paren{\rho^{-1}\circ\gamma} \dot\gamma 
\end{equation*}
Since $(\RE \vZ_\rho)\rho\equiv 0$, equivalently $\paren{\rho(\RE \vZ_\rho)}\rho\equiv 0$,  the curve $\gamma$ lies on a level set of $\rho$ so $\rho^{-1}\circ\gamma\equiv c$ for some  negative constant $c$. For the curve $\sigma:\RR\to X$ defined by $\sigma(t)=\gamma(ct)$, we have 
\begin{equation*}
(\RE \vZ_\rho)\circ \sigma (t)  = (\RE \vZ_\rho)(\gamma(ct)) = c\dot\gamma(ct) =\dot\sigma(t)
\end{equation*}
This means that $\sigma:\RR\to\Omega$ is the integral curve of $\RE \vZ_\rho$; therefore $\RE \vZ_\rho$ is complete. This completes the proof.
\end{proof}


\end{document}